\documentclass[12pt]{amsart}
\usepackage[all]{xy}
\usepackage[mathscr]{euscript}
\usepackage{hyperref}
\hypersetup{colorlinks=true,urlcolor=blue,citecolor=blue,linkcolor=blue}
\usepackage{courier}
\usepackage{amsmath,amscd,amssymb,amsthm,latexsym,longtable,diagram, picture}
\usepackage{graphicx}
\usepackage{array}
\usepackage{color}
\usepackage{enumerate}
\usepackage{nicefrac}
\usepackage{listings}
\usepackage{latexsym,bm,bbm,mathrsfs}
\usepackage{hyperref,graphicx, enumerate}
\usepackage{epsfig, xcolor, graphicx}
\usepackage[shortlabels]{enumitem}
\usepackage{indentfirst, setspace}
\usepackage{tikz-cd}

\allowdisplaybreaks
\usepackage{caption}
\usepackage{cancel}
\usepackage[normalem]{ulem}

\usetikzlibrary{calc,matrix,arrows,decorations.markings}

\newcommand{\bbz}{\mathbb{Z}}
\newcommand{\bbq}{\mathbb{Q}}
\newcommand{\bbp}{\mathbb{P}}
\newcommand{\bbc}{\mathbb{C}}

\newcommand{\bba}{\mathbb{A}}
\newcommand{\bbf}{\mathbb{F}}

\newcommand{\End}{\mathrm{End}}
\newcommand{\Aut}{\mathrm{Aut}}

\newcommand{\mat}{\begin{pmatrix}}
\newcommand{\emat}{\end{pmatrix}}

\newtheorem{theorem}{Theorem}[section]
\newtheorem{proposition}[theorem]{Proposition}
\newtheorem{corollary}[theorem]{Corollary}
\newtheorem{lemma}[theorem]{Lemma}
\newtheorem{rmk}[theorem]{Remark}

\begin{document}
 \title[Additive Rigidity for Images of Rational Points on Abelian Varieties]
{Additive Rigidity for Images of Rational Points on Abelian Varieties I: The simple case}

\author{Seokhyun Choi}

\address{
Dept. of Mathematical Sciences, KAIST,
291 Daehak-ro, Yuseong-gu,
Daejeon 34141, South Korea
}
\email{sh021217@kaist.ac.kr}

\date{\today}
\subjclass[2020]{Primary 11G05, 11G10}
\keywords{Abelian varieties, Elliptic curves, Additive structures, Mordell-Lang conjecture}

\begin{abstract}
    We study the interaction between the group law on an abelian variety and the additive structure induced on its image under a morphism to a projective space. Let $A/F$ be a simple abelian variety, $f:A \rightarrow \mathbb{P}^n$ be a morphism which is finite onto its image, and $\Gamma \subseteq A(F)$ be a finite-rank subgroup. We show that for any affine chart $\mathbb{A}^n \subseteq \mathbb{P}^n$ and any finite subset $X \subseteq f(\Gamma) \cap \mathbb{A}^n$, the energy satisfies $E(X) \ll \lvert X \rvert^2$ and the sumset satisfies $\lvert X+X \rvert \gg \lvert X \rvert^2$. We also prove a product version of the main theorem, where the morphism is compatible with the decomposition of the abelian variety into simple factors. The proof uses the uniform Mordell-Lang conjecture proven by Gao--Ge--K\"{u}hne.
\end{abstract}

\maketitle

\section{Introduction}\label{Introduction}

Many problems in number theory arise from the interaction of distinct algebraic structures. A fundamental example is provided by the Mordell-Weil group of an abelian variety defined over a number field. On the one hand, an abelian variety carries its intrinsic group law. On the other hand, after applying a morphism to a projective space and restricting to an affine chart, one obtains additive subsets of an affine space. It is then natural to ask how these two additive structures interact.

The central theme of this paper is that the group law on an abelian variety and the addition in affine space do not interact freely. When one passes to affine space via a projective morphism, this interaction is constrained and leads to strong restrictions on the additive structure of the image. From this point of view, our main result can be interpreted as an additive rigidity statement: the image of finite rank subgroups of simple abelian varieties cannot have strong additive structures, with bounds depending only on the geometric data and the rank. This phenomenon already appears in the case of elliptic curves, which we now recall.

The primary motivation for this work comes from the author's earlier paper \cite{Cho25}, which studies the case of $x$-coordinates of rational points on elliptic curves. In that paper, one proves that if the set of $x$-coordinates of rational points on an elliptic curve lies inside a generalized arithmetic progression with positive density, then the number of such points is bounded exponentially in the Mordell-Weil rank. In particular, \cite[Corollary~1.3]{Cho25} shows that if a finite set of $x$-coordinates has a small sumset, then its cardinality must be bounded exponentially in the Mordell-Weil rank. The present paper is motivated by the expectation that the same rigidity phenomenon
persists under the following three generalizations:
\begin{enumerate}
    \item replacing the elliptic curve $E$ by a simple abelian variety $A$;
    \item replacing the $x$-coordinate morphism by a morphism $f:A \rightarrow \bbp^n$ which is finite onto its image;
    \item replacing the field $\bbq$ by arbitrary number field $K$.
\end{enumerate}
In fact, we work more generally with an arbitrary finite-rank subgroup
\[\Gamma \subseteq A(\overline{K}),\]
rather than restricting to $K$-rational points. The advantage of this final generalization is that it allows us to treat torsion points of the abelian variety within the same framework.

Another major source of motivation is the recent work of Harrison, Mudgal, and Schmidt \cite{HMS26} on the sum-product phenomenon for algebraic groups. Their \cite[Theorem~2.1]{HMS26} shows that if $G$ and $H$ are algebraic groups of dimension $1$, if $\mathcal C_1,\ldots,\mathcal C_g$ are correspondences of degree $d$ between $G$ and $H$, and if $A$ is contained in a finite-rank subgroup of $G(\bbc)$, then under suitable nondegeneracy conditions, 
\[\lvert \mathcal C_1(A)+\cdots+\mathcal C_g(A) \rvert \gg \lvert A \rvert^g.\]
This strongly suggests that analogous sumset phenomena should persist in higher-dimensional algebraic groups. The present paper constitutes a first step in this direction, as our theorem applies to simple abelian varieties of arbitrary dimension. However, our method is subject to two main limitations. First, we treat only the two-fold sumset $X+X$, rather than general $m$-fold sumsets. Second, we restrict to correspondences arising from a morphism 
\[f:A \rightarrow \bbp^n\]
which is finite onto its image, and do not consider general correspondences between general algebraic groups. These restrictions arise from the nature of our method, which is based on a detailed analysis of the fibers of the equation
\[f(P)+f(Q)=u,\quad u \in \bba^n\]
inside $A^2$.

In a related direction, Shkredov \cite{Shk23} studied the multiplicative energy of subsets of algebraic varieties over finite fields. While his work concerns multiplicative energy over finite fields, the present paper studies additive energy of abelian varieties over fields of characteristic zero.

Finally, in another related direction, Caro and Garcia-Fritz \cite{CG25} studied linear relations among the $x$-coordinates of triples of points on elliptic curves. Using the uniform Mordell--Lang conjecture, they proved uniform bounds for the number of such relations. Their proof also uses an analysis of exceptional geometric configurations together with the uniform Mordell--Lang conjecture.

Our main Diophantine tool is the uniform Mordell-Lang conjecture proved by Gao, Ge, and K\"{u}hne \cite{GGK21}. Historically, this theorem builds on a line of developments beginning with the proof of Mordell-Lang conjecture by Faltings \cite{Fal91,Fal94} and continuing through uniform and quantitative refinements due to R\'{e}mond \cite{Rem00}, David--Philippon \cite{DP07}, Dimitrov--Gao--Habegger \cite{DGH21}, and Gao--Ge--K\"{u}hne \cite{GGK21}. The constant appearing in the uniform Mordell-Lang conjecture of Gao--Ge--K\"{u}hne \cite{GGK21} depends only on the dimension of an abelian variety, the degree of a subvariety, and the rank of a finite-rank subgroup. 

For a finite subset $X$ of a torsion-free abelian group, the sumset $X+X$ is defined by 
\[X+X := \{a+b \:|\: a,b \in X\},\]
and the energy of $X$ is defined by 
\[E(X) := \lvert \{(a,b,c,d) \in X^4\:|\:a+b=c+d\} \rvert.\]

Our main result is the following.

\begin{theorem}\label{main_theorem}
    Let $A/F$ be a simple abelian variety of dimension $g$ over an algebraically closed field $F$ of characteristic 0. Let $f:A \rightarrow \bbp^n$ be a morphism which is finite of degree $d$ onto its image, and let $t$ denote the projective degree of $f(A)$ in $\bbp^n$. Let $\Gamma$ be a subgroup of $A(F)$ of finite rank $r$. Then there exists a constant $C(g,d,t)>0$ with the following property.
    
    For every affine chart $\bba^n \subseteq \bbp^n$ and every finite subset $X \subseteq f(\Gamma) \cap \bba^n$, 
    \[E(X) \leq C(g,d,t)^{1+r}\lvert X \rvert^2,\qquad \lvert X+X \rvert \geq \left(C(g,d,t)^{-1}\right)^{1+r}\lvert X \rvert^2.\]
\end{theorem}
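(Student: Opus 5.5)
The plan is to reduce both inequalities to a uniform bound on the number of representations $r(u) := \lvert\{(a,b)\in X^2 : a+b=u\}\rvert$. Since $E(X)=\sum_u r(u)^2$ and $\sum_u r(u)=\lvert X\rvert^2$, Cauchy--Schwarz gives $\lvert X+X\rvert \geq \lvert X\rvert^4/E(X)$, so the sumset bound follows from the energy bound. For the energy, it suffices to show $r(u)\leq C^{1+r}$ for every $u$ outside an exceptional set whose total contribution is controlled; then $E(X)\leq \max_u r(u)\cdot \lvert X\rvert^2$. Concretely, for fixed $u\in\bba^n$ consider
\[V_u := \{(P,Q)\in A^2 : f(P),f(Q)\in \bba^n,\ f(P)+f(Q)=u\}\]
(closure in $A^2$). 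Then $r(u)\leq \lvert V_u(F)\cap \Gamma^2\rvert$, and $\Gamma^2$ has rank $2r$ in $A^2$. The degree of $V_u$ with respect to a fixed ample class (pullback of $\mathcal O(1)$ under $f\times f$) is bounded in terms of $n$, $d$, $t$ by Bézout. We may replace $\bbp^n$ by the span of $f(A)$, so $n$ is bounded in terms of $t$ and $g$.

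Apply the uniform Mordell--Lang theorem of Gao--Ge--K\"uhne to $V_u\subseteq A^2$. It gives $V_u\cap\Gamma^2 = \bigcup_{i=1}^{N}(x_i+B_i)\cap\Gamma^2$ with $N\leq C(g,d,t)^{1+r}$, where the $x_i+B_i$ are cosets of abelian subvarieties contained in $V_u$. If every $B_i$ is trivial, we get $r(u)\leq N$ directly. The key geometric step is to classify the positive-dimensional cosets $x+B\subseteq V_u$. Because $A$ is simple, the abelian subvarieties of $A^2$ that can occur are $A\times 0$, $0\times A$, $A^2$, and the graphs $\{(\alpha(z),\beta(z))\}$ of pairs of endomorphisms. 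For each type we argue as follows.
\begin{itemize}
\item If $x+B\supseteq \{P_0\}\times A$, then $f(Q)=u-f(P_0)$ would be constant on $A$. This contradicts finiteness of $f$ on the positive-dimensional $A$, and the case $B=A^2$ is excluded the same way.
\item If $B$ is a graph, then $f(\alpha z+p)+f(\beta z+q)\equiv u$ on $A$. This says that the map $z\mapsto f(\beta z+q)$ is an affine reflection of $z\mapsto f(\alpha z+p)$, so the image $f(A)\cap\bba^n$ is stable under the point reflection $y\mapsto u-y$.
\end{itemize}

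The main obstacle is controlling these reflection-type configurations. I would show that the set $S$ of $u$ with $f(A)\cap\bba^n$ symmetric about $u/2$ has at most one element. If $u,u'\in S$, then $f(A)\cap\bba^n$ is invariant under translation by $u-u'$, and a nonzero translation cannot preserve a proper affine variety of finite degree. So at most one exceptional $u_0$ exists. For that $u_0$, the fiber over each $a\in X$ is $\{b : b=u_0-a\}$, so $r(u_0)\leq \lvert X\rvert$ trivially. Its contribution to $E(X)$ is then at most $\lvert X\rvert^2$, which is acceptable.

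The delicate points are two. First, making the Bézout degree bound for $V_u$ rigorous in terms of $d,t$ only (via the embedding $f\times f$ into $\bbp^n\times\bbp^n$ and the Segre embedding). Second, handling the points lying over $f(A)\setminus\bba^n$ and the closure issues so that the degree stays uniform. Combining the bounded-$r(u)$ case with the single exceptional $u_0$ gives $E(X)\leq C^{1+r}\lvert X\rvert^2$, and hence the sumset bound.
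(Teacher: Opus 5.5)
There is a genuine gap in your bound on the exceptional set. The rest essentially matches the paper: the reduction to the energy, Gao--Ge--K\"uhne applied to the fibres in $A^2$, and using simplicity to reduce positive-dimensional cosets to graphs $\{(\alpha z,\beta z)\}$. The closure issue you flag is settled in the paper by noting that $\bar\mu_u$ fixes the hyperplane at infinity pointwise.

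You discard the functional equation $f(\alpha z+p)+f(\beta z+q)\equiv u$ on $A$ and keep only its consequence that $Z=f(A)$ is symmetric about $u/2$. You then claim the set $S$ of such $u$ has at most one element, because a nonzero translation cannot preserve a proper affine variety. That lemma is false. In characteristic $0$, invariance under $y\mapsto y+w$ forces invariance under the whole line $Fw$, so it only says $Z\cap\bba^n$ is a cylinder, and cylinders do occur. In the paper's motivating example $f=x:E\to\bbp^1$ one has $Z\cap\bba^1=\bba^1$, which is symmetric about every point. So $S=F$, and your argument yields only the trivial bound $E(X)\leq\lvert X\rvert^3$. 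The same happens for any finite surjection $A\to\bbp^g$. Even for $Z\subsetneq\bbp^n$, $S$ can be a whole line: map an abelian surface finitely onto a quadric cone in $\bbp^3$ and put the hyperplane at infinity through the vertex.

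The missing idea is to lift the reflection $\mu_u$ to the abelian variety itself; this is where simplicity is really used. The paper passes to $B=A/G$, with $G$ the translation stabilizer of $f$, and writes $f=g\circ q$. Comparing the identity at $P$ and $P+T$ for $T\in\ker\bar\alpha$ gives $\ker\bar\alpha=\ker\bar\beta$, hence $\bar\beta=\gamma\bar\alpha$ with $\gamma\in\Aut(B)$. So each $u\in\Sigma$ yields an automorphism $\psi_u=\tau_c\circ\gamma$ of $B$ with $g\circ\psi_u=\bar\mu_u\circ g$, and $\psi_u$ preserves the ample divisor $D=g^*H$. Then $\gamma\in\Aut(B,[\mathcal L_B])$ and $c$ is determined modulo $K(\mathcal L_B)$, both finite, giving $\lvert\Sigma\rvert\leq\lvert\Aut(B,[\mathcal L_B])\rvert\,\lvert K(\mathcal L_B)\rvert$.

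Once you have these lifts, your composition idea does go through on $B$. For $u,u'\in\Sigma$, the automorphism $\chi=\psi_u\circ\psi_{u'}$ satisfies $g\circ\chi=\bar\mu_u\circ\bar\mu_{u'}\circ g$, where $\bar\mu_u\circ\bar\mu_{u'}$ extends $y\mapsto y+(u-u')$. Also $\chi$ lies in the finite group of automorphisms of $B$ preserving $D$, so $\chi^N=\mathrm{id}$ for some $N\geq 1$. Hence $N(u-u')=0$ and $u=u'$. So your conclusion $\lvert\Sigma\rvert\leq 1$ is actually true, and sharper than the paper's bound, but only through this lift to $B$, not through symmetry of the image.
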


\begin{rmk}
    By the Cauchy-Schwarz inequality \eqref{Cauchy_Schwarz_X}, 
    \[\lvert X \rvert^4 \leq E(X) \lvert X+X \rvert.\]
    Therefore, it suffices to establish the bound 
    \[E(X) \leq C(g,d,t)^{1+r}\lvert X \rvert^2\]
    in Theorem~\ref{main_theorem}.
\end{rmk}

The simplicity assumption in Theorem~\ref{main_theorem} is removed in the sequel \cite{Cho26}, where the same additive rigidity statement is proved for arbitrary abelian varieties. The present simple-case argument nevertheless remains an important ingredient in the proof of the general case. In particular, Theorem~\ref{main_theorem} provides the base case for the induction.

In additive combinatorics, finite sets with small sumset or large energy are known to be highly additively structured. For instance, Freiman-type theorems show that if a finite subset $X$ satisfies $\lvert X+X \rvert \ll \lvert X \rvert$, then $X$ must be contained in a generalized arithmetic progression $G$ with positive density. From this point of view, Theorem~\ref{main_theorem} shows that images of finite rank subgroups of simple abelian varieties cannot have additive structures, up to constants depending only on the geometric data and the rank.

When $A$ is an elliptic curve $E$, and $f:A \rightarrow \bbp^n$ is the $x$-coordinate morphism $x:E \rightarrow \bbp^1$, one has $g=1$, $d=2$, and $t=1$. Hence Theorem~\ref{main_theorem} gives the following corollary.

\begin{corollary}\label{elliptic_curve_corollary}
    Let $E/\bbq$ be an elliptic curve of Mordell-Weil rank $r$. Then there exists an absolute constant $C>0$ with the following property.

    For every finite subset $X \subseteq x(E(\bbq)) \cap \bba^1$, 
    \[E(X) \leq C^{1+r} \lvert X \rvert^2,\qquad \lvert X+X \rvert \geq \left(C^{-1}\right)^{1+r}\lvert X \rvert^2\]
    
    In particular, if 
    \[\lvert X+X \rvert \leq K\lvert X \rvert \quad\text{or}\quad E(X) \geq \frac{\lvert X \rvert^3}{K}\]
    for some constant $K>0$, then
    \[\lvert X \rvert \leq K C^{1+r}.\]
\end{corollary}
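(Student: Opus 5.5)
The statement to prove is Corollary~\ref{elliptic_curve_corollary}, which should follow directly from Theorem~\ref{main_theorem}; the plan is to specialize the data and then do some elementary additive combinatorics.

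First, I specialize Theorem~\ref{main_theorem}. Take $F=\overline{\bbq}$ and let $A=E_{\overline{\bbq}}$. This is an abelian variety of dimension $g=1$, and it is simple, since an elliptic curve has no nontrivial proper abelian subvarieties. For $f$ I take the $x$-coordinate morphism $x:E\to\bbp^1$. It is finite of degree $d=2$, because it is the quotient by $[-1]$. It is surjective onto $\bbp^1$, so the image has projective degree $t=1$. For $\Gamma$ I take $E(\bbq)\subseteq E(\overline{\bbq})$, a finitely generated group of rank $r$ by Mordell--Weil; the torsion part does not affect the rank. The affine chart is the standard $\bba^1=\bbp^1\setminus\{\infty\}$, so $x(E(\bbq))\cap\bba^1$ consists of the $x$-coordinates of the affine rational points. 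Theorem~\ref{main_theorem} then gives, for every finite $X$ in this set, $E(X)\le C(1,2,1)^{1+r}|X|^2$ and $|X+X|\ge (C(1,2,1)^{-1})^{1+r}|X|^2$. Setting $C:=C(1,2,1)$ gives an absolute constant, since $(g,d,t)=(1,2,1)$ is independent of $E$.

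Next, the ``in particular'' clause. Suppose first that $E(X)\ge |X|^3/K$. Combining with the upper bound gives $|X|^3/K\le C^{1+r}|X|^2$, so $|X|\le KC^{1+r}$ (the case $X=\emptyset$ is trivial). Suppose instead that $|X+X|\le K|X|$. Combining with the lower bound gives $C^{-(1+r)}|X|^2\le K|X|$, so again $|X|\le KC^{1+r}$.

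There is no real obstacle. The only points to check are the values of the invariants and that they give a single constant independent of $E$. One should also note that simplicity and finiteness hold after base change to the algebraically closed field $\overline{\bbq}$. It may be convenient to enlarge $C$ so that $C\ge1$.
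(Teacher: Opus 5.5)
Your proposal is correct and is essentially the paper's argument. The paper likewise specializes Theorem~\ref{main_theorem} to the $x$-coordinate map $x:E\to\bbp^1$ with $(g,d,t)=(1,2,1)$ and $\Gamma=E(\bbq)\subseteq E(\overline{\bbq})$, obtaining the absolute constant $C=C(1,2,1)$. The ``in particular'' clause is exactly your elementary comparison of the hypothesis with the energy and sumset bounds.
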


Hence, the small sumset \cite[Corollary~8.3]{Cho25} and large energy \cite[Corollary~8.7]{Cho25} consequences established in \cite[Section~8]{Cho25} follow immediately from Theorem~\ref{main_theorem}. Moreover, our result removes the dependence on the elliptic curve $E$ in the constant, without assuming Lang's conjecture. 

However, there is an effectiveness issue. The absolute constant $C$ appearing in Corollary~\ref{elliptic_curve_corollary} is ineffective, since it depends on the ineffective constant $c(g,d)$ in the Mordell-Lang conjecture of \cite{GGK21}. By contrast, by \cite{Cho25}, one can obtain an effective bound $C$ whenever the constant $c_L$ appearing in Lang's conjecture \cite[Conjecture~1.4]{Cho25} is effective. This is indeed the case for the family of elliptic curves considered in \cite{Cho25}.

Theorem~\ref{main_theorem} also yields a particularly clean consequence for torsion points, since $\Gamma$ has rank 0 in this case.

\begin{corollary}\label{torsion_corollary}
    Let $A/F$ and $f:A \rightarrow \bbp^n$ be as in Theorem~\ref{main_theorem}. Then for every affine chart $\bba^n \subseteq \bbp^n$ and every finite subset $X \subseteq f(A(F)_{\mathrm{tors}})\cap \bba^n$, 
    \[E(X) \leq C(g,d,t) \lvert X \rvert^2,\qquad \lvert X+X \rvert \geq C(g,d,t)^{-1}\lvert X \rvert^2.\]
\end{corollary}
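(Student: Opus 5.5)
This is a direct specialization of Theorem~\ref{main_theorem} to the subgroup $\Gamma = A(F)_{\mathrm{tors}}$, so the plan is simply to verify that $\Gamma$ meets the hypotheses with $r=0$ and read off the bounds. First, $A(F)_{\mathrm{tors}}$ is a subgroup of $A(F)$, since the torsion elements of any abelian group form a subgroup.

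Next I would check that its rank is $0$. I take the rank of $\Gamma$ to mean $\dim_{\bbq}(\Gamma\otimes_{\bbz}\bbq)$, which is the convention under which the abstract counts torsion among finite-rank subgroups. A torsion group becomes zero after tensoring with $\bbq$, so $A(F)_{\mathrm{tors}}$ has finite rank $r=0$. This holds even though $A(F)_{\mathrm{tors}}$ is infinite: in characteristic $0$ it is isomorphic to $(\bbq/\bbz)^{2g}$. This is the only point that needs care. The definition of finite rank must allow infinite torsion, and it does, because the uniform Mordell--Lang theorem of \cite{GGK21} is stated for subgroups $\Gamma$ with $\dim_{\bbq}\Gamma\otimes\bbq$ finite, which includes all torsion points.

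Applying Theorem~\ref{main_theorem} with $r=0$ gives $C(g,d,t)^{1+0}=C(g,d,t)$ and $(C(g,d,t)^{-1})^{1+0}=C(g,d,t)^{-1}$. Hence, for every affine chart $\bba^n\subseteq\bbp^n$ and every finite $X\subseteq f(A(F)_{\mathrm{tors}})\cap\bba^n$, we get $E(X)\le C(g,d,t)\lvert X\rvert^2$ and $\lvert X+X\rvert\ge C(g,d,t)^{-1}\lvert X\rvert^2$. As in the remark after Theorem~\ref{main_theorem}, the sumset bound also follows from the energy bound via $\lvert X\rvert^4\le E(X)\lvert X+X\rvert$.
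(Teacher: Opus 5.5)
Your proposal is correct and is exactly the paper's argument: the paper derives the corollary by applying Theorem~\ref{main_theorem} to $\Gamma = A(F)_{\mathrm{tors}}$, which has rank $0$, so the exponents $1+r$ become $1$. The one point that needed checking is that the infinite group $(\bbq/\bbz)^{2g}$ still has rank $0$ under the $\dim_{\bbq}(\Gamma\otimes\bbq)$ convention of \cite{GGK21}, and you handled it correctly.
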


We also prove a product version of Theorem~\ref{main_theorem}. Suppose that an abelian variety is given as a product of simple factors and that the morphism is defined componentwise. Then we have the following theorem.

\begin{theorem}\label{product_theorem}
    For each $1 \leq i \leq m$, let $A_i/F$ and $f_i:A_i \rightarrow \bbp^{n_i}$ be as in Theorem~\ref{main_theorem}, and write $C_i := C(g_i,d_i,t_i)$. Let 
    \[A := A_1 \times \cdots \times A_m,\quad n := n_1+\cdots+n_m,\]
    and consider the product morphism 
    \[f:A \longrightarrow \bbp^{n_1} \times \cdots \times \bbp^{n_m},\quad f = (f_1,\ldots,f_m).\]
    Let $\Gamma$ be a subgroup of $A(F)$ of finite rank $r$. Then the constant 
    \[C := \prod_{i=1}^m C_i\]
    satisfies the following property.
    
    Fix affine charts $\bba^{n_i} \subseteq \bbp^{n_i}$ and identify 
    \[\bba^{n} = \bba^{n_1} \times \cdots \times \bba^{n_m} \subseteq \bbp^{n_1} \times \cdots \times \bbp^{n_m}.\]
    For every finite subset $X \subseteq f(\Gamma) \cap \bba^n$, 
    \[E(X) \leq C^{1+r}\lvert X \rvert^2,\qquad \lvert X+X \rvert \geq \left(C^{-1}\right)^{1+r}\lvert X \rvert^2.\]
\end{theorem}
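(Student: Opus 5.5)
We deduce Theorem~\ref{product_theorem} from Theorem~\ref{main_theorem} by induction on $m$, using a fibering argument for additive energy. The case $m=1$ is Theorem~\ref{main_theorem}. As in the Remark after Theorem~\ref{main_theorem}, the Cauchy--Schwarz inequality $\lvert X\rvert^4\le E(X)\lvert X+X\rvert$ reduces everything to the energy bound. For $m\ge 2$, write $A=A'\times A_m$ with $A'=A_1\times\cdots\times A_{m-1}$, $f=(f',f_m)$, and let $\pi':\bba^n\to\bba^{n'}$, $\pi_m:\bba^n\to\bba^{n_m}$ be the coordinate projections, with $n'=n_1+\cdots+n_{m-1}$. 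Let $\Gamma'$ and $\Gamma_m$ be the images of $\Gamma$ under the projections of $A$ onto $A'$ and $A_m$. These are subgroups of rank at most $r$, and we have $\pi'(X)\subseteq f'(\Gamma')\cap\bba^{n'}$ and $\pi_m(X)\subseteq f_m(\Gamma_m)\cap\bba^{n_m}$.

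\textbf{Step 1 (fibering the energy).} For $u\in\bba^{n'}$ let $X_u=\{y\in\bba^{n_m}:(u,y)\in X\}$. A solution $(a,b,c,d)$ of $a+b=c+d$ in $X^4$ projects to a solution $(u_1,u_2,u_3,u_4)$ in $\pi'(X)^4$, with second coordinates $y_i\in X_{u_i}$ satisfying $y_1+y_2=y_3+y_4$. Thus
\[E(X)=\sum_{u_1+u_2=u_3+u_4}\bigl\lvert\{(y_i)\in X_{u_1}\times X_{u_2}\times X_{u_3}\times X_{u_4}:y_1+y_2=y_3+y_4\}\bigr\rvert .\]
The inner count is the mixed energy $E(X_{u_1},X_{u_2},X_{u_3},X_{u_4})$. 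By the standard Cauchy--Schwarz/H\"older inequality it is at most $\prod_i E(X_{u_i})^{1/4}$.

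\textbf{Step 2 (applying the one-factor bound).} Each $X_u$ lies in $f_m(\Gamma_m)\cap\bba^{n_m}$, so Theorem~\ref{main_theorem} gives $E(X_u)\le C_m^{1+r}\lvert X_u\rvert^2$. Set $w(u)=\lvert X_u\rvert^{1/2}$. Then
\[E(X)\le C_m^{1+r}\sum_{u_1+u_2=u_3+u_4}\prod_{i=1}^4 w(u_i)=C_m^{1+r}E_w(\pi'(X)),\]
where $E_w$ is the weighted energy. The sumset estimate alone is not enough here, so we need a weighted version of the induction hypothesis: $E_w(Y)\le C'^{1+r}\lVert w\rVert_2^4$ for nonnegative weights $w$ on $Y\subseteq f'(\Gamma')\cap\bba^{n'}$, with $C'=\prod_{i<m}C_i$. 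Granting this, $\lVert w\rVert_2^4=\bigl(\sum_u\lvert X_u\rvert\bigr)^2=\lvert X\rvert^2$, and the constant becomes $(C'C_m)^{1+r}=C^{1+r}$, as required.

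\textbf{Step 3 (the weighted bound; main obstacle).} The crux is upgrading the unweighted statement $E(Y)\le K\lvert Y\rvert^2$, which must hold for \emph{every} finite subset $Y$ of the ambient set, to the weighted statement $E_w(Y)\le K\lVert w\rVert_2^4$. We would do this by dyadic or level-set decomposition. Write $w=\sum_j w\,1_{Y_j}$ over level sets $Y_j=\{2^{j}\le w<2^{j+1}\}$ (or, more sharply, use the layer-cake representation $w=\int_0^\infty 1_{\{w>s\}}\,ds$). Then expand $E_w$ multilinearly. Each term is a mixed energy of four superlevel sets, bounded via H\"older by $\prod E(Y_{s_i})^{1/4}\le K\prod\lvert Y_{s_i}\rvert^{1/2}$. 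Integrating gives $K\bigl(\int\lvert\{w>s\}\rvert^{1/2}ds\bigr)^4$, which is a Lorentz $L^{2,1}$ norm rather than $L^2$, so a naive argument loses a factor. To avoid this loss, we would instead prove the weighted inequality directly, by observing that the proof of Theorem~\ref{main_theorem} (and, inductively, of the product case) actually bounds the additive representation function: for each $s$, the number of pairs $(a,b)$ with $a+b=s$ and $a\ne b$ off a controlled exceptional set is at most $C^{1+r}$, via uniform Mordell--Lang applied to the fibers of $f(P)+f(Q)=s$. Such a bound on representations gives $E_w\le K\lVert w\rVert_2^4$ immediately, since it is an $\ell^2\to\ell^2$ operator bound. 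If that route fails, we would carry the induction with the stronger hypothesis ``$E(Y_1,Y_2,Y_3,Y_4)\le K\prod\lvert Y_i\rvert^{1/2}$ for all $Y_i$'', which is multilinear and tensorizes exactly through Step 1 with $w_i(u)=\lvert X^{(i)}_u\rvert^{1/2}$.
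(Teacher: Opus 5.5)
\emph{Verdict: there is a genuine gap.} Your Steps 1 and 2 are fine; the bound $E(Y_1,Y_2,Y_3,Y_4)\le\prod_i E(Y_i)^{1/4}$ follows from Cauchy--Schwarz applied twice. But those steps reduce everything to a weighted bound $E_w\le C'^{1+r}\lVert w\rVert_2^4$ on the \emph{product} $A'=A_1\times\cdots\times A_{m-1}$, and none of your Step 3 routes proves it.

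\begin{enumerate}
\item Route (a) loses a factor, as you say.
\item Route (b) works only for a single simple factor. There the exceptional set $\Sigma$ is finite with $\lvert\Sigma\rvert\le C_2(g,d,t)$, so the trivial bound $N(w)_s^2\le\lVert w\rVert_2^4$ for $s\in\Sigma$ can be summed. For $A'$, the fibre of $f'(P)+f'(Q)=s$ is, up to reordering coordinates, the product of the single-factor fibres. It therefore contains a translate of a nontrivial abelian subvariety as soon as a \emph{single} coordinate $s_i$ lies in $\Sigma_i$. The exceptional set is then infinite and the representation function is unbounded on it, so no ``immediate'' $\ell^2$ bound follows. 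The phrase ``inductively, of the product case'' hides exactly the tensorization that has to be proved.
\item Route (c) does not help either. By the very H\"older inequality of Step 1, the hypothesis $E(Y_1,\dots,Y_4)\le K\prod_i\lvert Y_i\rvert^{1/2}$ is \emph{equivalent} to $E(Y)\le K\lvert Y\rvert^2$. After Step 1 the weights $\lvert X^{(i)}_u\rvert^{1/2}$ on $\pi'(X)$ are not indicator functions, so you again need a weighted bound on $A'$. Passing through level sets reproduces the $L^{2,1}$ loss of route (a).
\end{enumerate}

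The missing idea is to put the weights on the simple factor and the \emph{unweighted} induction hypothesis on the product, which is what the paper does. Fibre over $A_1$, writing $X_a=\{x:(a,x)\in X\}$ with $x$ in the remaining factors, and set $C'=\prod_{i\ge 2}C_i$. Lemma~\ref{product_lemma} together with $E(X_a,X_b)\le E(X_a)^{1/2}E(X_b)^{1/2}\le C'^{1+r}\lvert X_a\rvert\lvert X_b\rvert$ gives
$E(X)\le C'^{1+r}\sum_u\bigl(\sum_{a+b=u}w(a)w(b)\bigr)^2$ with $w(a)=\lvert X_a\rvert^{1/2}$. This weighted sum lives on the simple factor $A_1$, where your route (b) argument is now legitimate. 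Off $\Sigma$ one has $M_u\le C_1(g,d,t)^{1+r}$, giving $\sum_{u\notin\Sigma}N(w)_u^2\le C_1(g,d,t)^{1+r}\bigl(\sum_a w(a)^2\bigr)^2$. Since $\lvert\Sigma\rvert\le C_2(g,d,t)$, one also has $\sum_{u\in\Sigma}N(w)_u^2\le C_2(g,d,t)\bigl(\sum_a w(a)^2\bigr)^2$. This is Theorem~\ref{weighted_theorem}.

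Alternatively, you can keep your ordering if you carry the weighted statement itself through the induction, with Theorem~\ref{weighted_theorem} as the base case. That statement does tensorize through your Step 1, using $W(u)=\lVert w(u,\cdot)\rVert_2$ and $\lVert W\rVert_2=\lVert w\rVert_2$.
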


The proof of Theorem~\ref{main_theorem} begins by fixing an affine chart $\bba^n \subseteq \bbp^n$ and considering the morphism
\[\Phi : V \times V \longrightarrow \bba^n,\quad (P,Q) \longmapsto f(P)+f(Q),\]
where $V=f^{-1}(\bba^n)$. For each $u \in \bba^n$, we define 
\[Y_u:=\overline{\Phi^{-1}(u)} \subseteq A^2.\]
The energy of a finite set $X \subseteq f(\Gamma) \cap \bba^n$ is then controlled by the number of pairs $(P,Q) \in \Gamma^2$ lying on these fibers $Y_u(F)$. For those $u \in \bba^n$ for which $Y_u$ does not contain any translate of a nontrivial abelian subvariety of $A^2$, uniform Mordell-Lang conjecture gives the required bound for $\lvert Y_u(F)\cap \Gamma^2 \rvert$. The problem is therefore reduced to understanding the exceptional set of $u \in \bba^n$ for which $Y_u$ contains a translate of a nontrivial abelian subvariety of $A^2$.

This is precisely where the simplicity of $A$ enters. In contrast with \cite{HMS26}, where the key step is to establish the non-degeneracy of $\mathcal V_{\mathrm{sum}}$, we analyze the exceptional set $\Sigma$ directly. Since $A$ is simple, every nontrivial proper abelian subvariety of $A^2$ is of the form
\[A(\alpha,\beta):=\{(\alpha P,\beta P)\:|\:P\in A\}.\]
Hence if $Y_u$ contains a translate of a nontrivial abelian subvariety of $A^2$, then we obtain a functional equation 
\[f(a+\alpha P)+f(b+\beta P)\equiv u.\]
We prove that this identity can occur only in finitely many cases, which yields the finiteness of $\Sigma$.

The paper is organized as follows. In Section~2, we prove preliminary combinatorial lemmas needed in the proof of Theorem~\ref{main_theorem} and Theorem~\ref{product_theorem}. In Section~3, we reduce Theorem~\ref{main_theorem} in geometric terms and recall the uniform Mordell-Lang conjecture needed in Section~4. In Section~4, we prove a uniform bound for $Y_u(F) \cap \Gamma^2$ for $u \notin \Sigma$ and in Section~5, we prove that the exceptional set $\Sigma$ is finite. In Section~6, we complete the proof of Theorem~\ref{main_theorem} using the bounds established in Section~4 and 5, and then prove Theorem~\ref{product_theorem}.

\section{Preliminary combinatorics}

Let $X$, $Y$ be finite subsets of a torsion-free abelian group $Z$. The sumset $X+Y$ is defined by 
\[X+Y := \{a+b\:|\:a \in X,\:b \in Y\},\]
and the energy $E(X,Y)$ between $X$ and $Y$ is defined by 
\[E(X,Y) := \lvert \{(a,b,c,d) \in (X \times Y)^2\:|\:a+b=c+d\} \rvert.\]
In particular, the energy of $X$ is defined by 
\[E(X) := E(X,X).\]

For each $u \in Z$, define 
\[N_u := \lvert \{(a,b) \in X \times Y\:|\:a+b=u\} \rvert.\]
Then we have 
\[\lvert X \rvert\lvert Y \rvert = \sum_{u \in X+Y} N_u,\quad E(X,Y) = \sum_{u \in X+Y} N_u^2.\]
By the Cauchy-Schwarz inequality, we obtain 
\begin{equation}\label{Cauchy_Schwarz_XY}
    \lvert X \rvert^2\lvert Y \rvert^2 \leq E(X,Y)\lvert X+Y \rvert.
\end{equation}
In particular, taking $X=Y$ in \eqref{Cauchy_Schwarz_XY} gives 
\begin{equation}\label{Cauchy_Schwarz_X}
    \lvert X \rvert^4 \leq E(X)\lvert X+X \rvert.
\end{equation}

We next observe that 
\begin{align*}
    E(X,Y) &= \lvert \{(a,b,c,d) \in (X \times Y)^2\:|\:a+b=c+d\} \rvert \\
    &= \lvert \{(a,c,b,d) \in X^2 \times Y^2\:|\:a-c=d-b\} \rvert \\
    &= \sum_{u \in Z} \lvert \{(a,c) \in X^2\:|\:a-c=u\} \rvert\lvert \{(b,d) \in Y^2\:|\:b-d=u\} \rvert.
\end{align*}
In particular, 
\[E(X) = \sum_{u \in Z} \lvert \{(a,c) \in X^2\:|\:a-c=u\} \rvert^2,\quad E(Y) = \sum_{u \in Z} \lvert \{(b,d) \in Y^2\:|\:b-d=u\} \rvert^2.\]
By the Cauchy-Schwarz inequality, 
\begin{equation}\label{Energy_Cauchy_Schwarz}
    E(X,Y)^2 \leq E(X)E(Y).
\end{equation}

We finally need an estimate of the energy $E(X)$ when the ambient group $Z$ is given by the product form.
\begin{lemma}\label{product_lemma}
    Let $X$ be a finite subset of a product group $Z \times W$, where $Z$ and $W$ are torsion-free abelian groups. For each $a \in Z$, define 
    \[X_a := \{x \in W\:|\:(a,x) \in X\}.\]
    Then 
    \[E(X) \leq \sum_{u \in Z}\left(\sum_{a+b=u} E(X_a,X_b)^{1/2}\right)^2.\]
\end{lemma}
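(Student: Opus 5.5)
Write $X \subseteq Z \times W$ as the disjoint union of the fibers $\{a\} \times X_a$. A quadruple $((a,x),(b,y),(c,z),(e,w)) \in X^4$ satisfies $(a,x)+(b,y)=(c,z)+(e,w)$ exactly when $a+b=c+e$ in $Z$ and $x+y=z+w$ in $W$. The plan is to first group quadruples by the common value $u=a+b=c+e$ of the first coordinates. Then, for each $u$, group them by the ordered pairs $(a,b)$ and $(c,e)$ with $a+b=u=c+e$. This gives the exact identity
\[
E(X) = \sum_{u \in Z} \sum_{a+b=u} \sum_{c+e=u} \bigl\lvert \{(x,y,z,w) \in X_a \times X_b \times X_c \times X_e \:|\: x+y=z+w\} \bigr\rvert ,
\]
where the inner sums run over $a,b,c,e$ in the finite projection of $X$ to $Z$.

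For fixed $(a,b)$ and $(c,e)$, introduce the representation function $r_{a,b}(v) := \lvert \{(x,y) \in X_a \times X_b \:|\: x+y=v\} \rvert$ for $v \in W$. The inner count is then $\sum_{v \in W} r_{a,b}(v) r_{c,e}(v)$. By Cauchy--Schwarz in $v$ this is at most
\[
\Bigl(\sum_v r_{a,b}(v)^2\Bigr)^{1/2}\Bigl(\sum_v r_{c,e}(v)^2\Bigr)^{1/2} = E(X_a,X_b)^{1/2} E(X_c,X_e)^{1/2},
\]
using $E(X,Y)=\sum_v N_v^2$ as in the computation preceding \eqref{Cauchy_Schwarz_XY}. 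Summing over $(a,b)$ and $(c,e)$ with $a+b=c+e=u$, the double sum factors as $\bigl(\sum_{a+b=u} E(X_a,X_b)^{1/2}\bigr)^2$. Summing over $u$ then gives the claim.

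There is no real obstacle here. The only care needed is bookkeeping: $a,b$ range over ordered pairs, so that the factorization into a square is exact. Empty fibers contribute zero, so all sums are finite. An alternative route via \eqref{Energy_Cauchy_Schwarz} is not needed, since the pointwise Cauchy--Schwarz above already yields the square-root form directly.
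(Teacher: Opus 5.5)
Your proof is correct and is essentially the paper's argument: both write $N_{(u,v)}=\sum_{a+b=u} r_{a,b}(v)$ and reduce the claim to Minkowski's inequality $\bigl\lVert \sum_{a+b=u} r_{a,b}\bigr\rVert_{\ell^2(W)} \le \sum_{a+b=u} \lVert r_{a,b}\rVert_{\ell^2(W)}$. The only difference is where Cauchy--Schwarz is applied. The paper uses a weighted Cauchy--Schwarz over the pairs $(a,b)$ pointwise in $v$, which divides by $E(X_a,X_b)^{1/2}$. You instead expand the square and apply Cauchy--Schwarz in $v$ to each cross term, which has the small advantage of never dividing by an $E(X_a,X_b)$ that could vanish when a fiber is empty.
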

\begin{proof}
    Note that for each $(u,v) \in Z \times W$, 
    \begin{align*}
        N_{(u,v)} &= \lvert \{((a,x),(b,y)) \in X^2\:|\:(a,x)+(b,y)=(u,v)\} \rvert \\
        &= \sum_{a+b=u} \lvert \{(x,y) \in X_a \times X_b\:|\:x+y=v\} \rvert.
    \end{align*}
    By the Cauchy-Schwarz inequality, 
    \[N_{(u,v)}^2 \leq \left(\sum_{a+b=u} E(X_a,X_b)^{1/2}\right)\left(\sum_{a+b=u} \frac{\lvert \{(x,y) \in X_a \times X_b\:|\:x+y=v\} \rvert^2}{E(X_a,X_b)^{1/2}}\right).\]
    Summing over $v \in W$ gives 
    \[\sum_{v \in W} N_{(u,v)}^2 \leq \left(\sum_{a+b=u} E(X_a,X_b)^{1/2}\right)^2.\]
    Summing over $u \in Z$ gives 
    \[E(X) = \sum_{u \in Z}\sum_{v \in W} N_{(u,v)}^2 \leq \sum_{u \in Z}\left(\sum_{a+b=u} E(X_a,X_b)^{1/2}\right)^2.\]
\end{proof}

\section{Reduction of Theorem~\ref{main_theorem}} 

In this section we reformulate Theorem~\ref{main_theorem} in geometric terms. The basic observation is that the additive energy $E(X)$ can be controlled by counting pairs of points in $\Gamma^2$ with the same image under $f(P)+f(Q)$. This leads naturally to the family of closed subschemes $Y_u$, whose geometry will govern the contribution of each term $N_u$ in the energy calculation.

Let $A/F$ be a simple abelian variety of dimension $g$ over an algebraically closed field $F$ of characteristic 0. Let $f:A \rightarrow \bbp^n$ be a morphism which is finite of degree $d$ onto its image $Z$, and let $t$ denote the projective degree of $Z$ in $\bbp^n$. Let $\Gamma$ be a subgroup of $A(F)$ of finite rank $r$. 

Fix an affine chart $\bba^n \subseteq \bbp^n$ and set $U = Z \cap \bba^n$, $V = f^{-1}(\bba^n) \subseteq A$. Then we have a morphism 
\[\Phi : V \times V \longrightarrow \bba^n,\quad (P,Q) \longmapsto f(P)+f(Q).\]
For each $u \in \bba^n$, define 
\[Y_u := \overline{\Phi^{-1}(u)} \subseteq A^2,\]
where the closure is the Zariski closure in $A^2$. Then $Y_u$ is a closed subscheme of $A^2$. 

Suppose a finite subset $X \subseteq f(\Gamma) \cap \bba^n$ is given. For each $u \in \bba^n$, define 
\[N_u := \lvert \{(a,b) \in X^2\:|\:a+b=u\} \rvert\]
and 
\[M_u := \lvert \{(P,Q) \in \Gamma^2\:|\:f(P),f(Q) \in \bba^n,\:f(P)+f(Q)=u\} \rvert.\]
Then we have a trivial estimate 
\begin{equation}\label{N_u_bound}
    N_u \leq \min\{M_u,\lvert X \rvert\}.
\end{equation}
We also note that the set 
\[\{(P,Q) \in \Gamma^2\:|\:f(P),f(Q) \in \bba^n,\:f(P)+f(Q)=u\}.\]
is contained in 
\[Y_u(F) \cap \Gamma^2,\]
hence 
\[M_u \leq \lvert Y_u(F) \cap \Gamma^2 \rvert.\]
This shows that the key issue is to bound $Y_u(F) \cap \Gamma^2$.

We will prove that there exists a finite set $\Sigma \subseteq \bba^n$ such that 
\begin{equation}\label{Sigma_finite}
    \lvert \Sigma \rvert \leq C_2(g,d,t)
\end{equation}
and for every $u \notin \Sigma$, 
\begin{equation}\label{Y_u_bounded}
    \lvert Y_u(F) \cap \Gamma^2 \rvert \leq C_1(g,d,t)^{1+r}.
\end{equation}
The first statement controls the number of exceptional $u \in \bba^n$, while the second gives a uniform bound for the remaining fibers. The proof of Theorem~\ref{main_theorem} directly follows from \eqref{Sigma_finite} and \eqref{Y_u_bounded}, as we can see in Section~6.

The main idea is that the exceptional set $\Sigma$ corresponds to those $u \in \bba^n$ for which $Y_u$ contains a translate of a nontrivial abelian subvariety, whereas for $u \notin \Sigma$ one expects a uniform control of $Y_u(F) \cap \Gamma^2$ from the uniform Mordell-Lang conjecture. We lastly recall the uniform Mordell-Lang conjecture proven by Gao-Ge-K\"{u}hne \cite{GGK21}.

\begin{theorem}[Uniform Mordell-Lang conjecture]\label{uniform_Mordell_Lang}
    Let $A/F$ be a polarized abelian variety of dimension $g$, $X$ be an irreducible closed subvariety of $A$ of degree $d$ (with respect to the polarization), and $\Gamma$ be a subgroup of $A(F)$ of finite rank $r$. If $X$ does not contain any translate of a nontrivial abelian subvariety of $A$, then $X(F) \cap \Gamma$ is finite and 
    \[\lvert X(F) \cap \Gamma \rvert \leq c(g,d)^{1+r}.\]
\end{theorem}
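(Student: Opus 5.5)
Since this is the theorem of Gao--Ge--K\"{u}hne \cite{GGK21}, in the paper we will simply invoke it. Still, here is the route I would take to prove it, following the strategy of R\'{e}mond \cite{Rem00}, David--Philippon \cite{DP07} and Dimitrov--Gao--Habegger \cite{DGH21}.

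\emph{Reductions and the height.} First reduce to $X$ nondegenerate. By Faltings \cite{Fal91,Fal94}, $X(F)\cap\Gamma$ lies in finitely many translates of abelian subvarieties contained in $X$. Since $X$ contains no translate of a nontrivial abelian subvariety, $X(F)\cap\Gamma$ is finite, and only the uniform bound remains. Next use a specialization argument to reduce to $A$, $X$ and the division hull of $\Gamma$ defined over $\overline{\bbq}$. Here one must check that the degree and rank are preserved and that the property of containing no translate of an abelian subvariety survives a suitable specialization. Finally, embed $\Gamma\otimes\mathbb{R}$ in a Euclidean space of dimension $r$ using the N\'{e}ron--Tate height $\hat h$ attached to the polarization. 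The points of $X(\overline{\bbq})\cap\Gamma$ then become vectors, and we split them according to whether $\hat h(P)$ is large or small compared with $\max\{1,h_{\mathrm{Fal}}(A)\}$.

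\emph{Large points.} Here I would use the explicit Vojta and Mumford inequalities of R\'{e}mond. If two points $P,Q$ of large height on $X$ have nearly the same direction in $\Gamma\otimes\mathbb{R}$ and comparable heights, one gets a contradiction unless $X$ contains a translate of an abelian subvariety; the constants depend only on $g$ and $d$. A cone-covering argument then shows that the unit sphere in $\mathbb{R}^r$ is covered by $c^{r}$ caps. Each cap contains at most $c(g,d)$ large points in each dyadic height range, and the gap principle makes the number of relevant ranges bounded. This gives at most $c(g,d)^{1+r}$ large points.

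\emph{Small points: the main obstacle.} Points of height at most $c\max\{1,h_{\mathrm{Fal}}(A)\}$ are the hard part. Here one needs a uniform Bogomolov-type statement: there are constants $c_1,c_2>0$ depending only on $g$ and $d$ such that the points $P\in X(\overline{\bbq})$ with $\hat h(P)\le c_1\max\{1,h_{\mathrm{Fal}}(A)\}$ lie in a proper Zariski-closed subset of degree at most $c_2$. I would prove this following Dimitrov--Gao--Habegger, via the height inequality on the universal family over $\mathbb{A}_g$, its nondegeneracy through Betti map and Ax--Schanuel arguments, and K\"{u}hne's equidistribution in families; the extension to arbitrary $X$ uses the Gao--Ge--K\"{u}hne relative Bogomolov theorem. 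Iterating over the components of this exceptional set, with controlled degrees and induction on dimension, bounds the small points by $c(g,d)$ times a packing count. That packing count is the number of lattice points of $\Gamma$ in a ball of radius comparable to the minimal gap, which is at most $c^{1+r}$. Adding the two counts gives $\lvert X(F)\cap\Gamma\rvert\le c(g,d)^{1+r}$.
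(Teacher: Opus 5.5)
Your proposal agrees with the paper. There the statement is not proved but simply quoted as a special case of \cite[Theorem~1.1]{GGK21}, namely the case where $X$ contains no translate of a positive-dimensional abelian subvariety, so that the Gao--Ge--K\"{u}hne bound controls all of $X(F)\cap\Gamma$. Your outline of the R\'{e}mond/Dimitrov--Gao--Habegger/Gao--Ge--K\"{u}hne argument (large points via the Vojta and Mumford inequalities, small points via the new gap principle) is only background and plays no role in the paper.
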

\begin{proof}
    This is a special case of \cite[Theorem~1.1]{GGK21}.
\end{proof}

\section{Uniform bound for $Y_u(F) \cap \Gamma^2$}

In this section we prove Proposition~\ref{C_1_bound}, which corresponds to the bound \eqref{Y_u_bounded}. The strategy is to compare each fiber $Y_u$ with the closed subscheme $D_u$ in $\bbp^n$. This allows us to bound both the degrees and the number of irreducible components of $Y_u$. Once these geometric bounds are established, Theorem~\ref{uniform_Mordell_Lang} yields the required estimate for $Y_u(F) \cap \Gamma^2$.

\begin{proposition}\label{C_1_bound}
    If $Y_u$ does not contain any translate of a nontrivial abelian subvariety of $A^2$, then 
    \[\lvert Y_u(F) \cap \Gamma^2 \rvert \leq C_1(g,d,t)^{1+r}.\]
\end{proposition}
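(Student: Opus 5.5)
We plan to apply Theorem~\ref{uniform_Mordell_Lang} to each irreducible component of $Y_u$ inside the abelian variety $A^2$. This requires two uniform bounds in terms of $(g,d,t)$ alone: a bound on the number of irreducible components of $Y_u$, and a bound on the degree of each component with respect to a fixed polarization of $A^2$. The polarization must be chosen uniformly. We take the line bundle $L=f^*\mathcal{O}_{\bbp^n}(1)$ on $A$. Since $f$ is finite onto its image, $L$ is ample, and $\deg_L A = d\,t$. On $A^2$ we use the product polarization $L\boxtimes L = p_1^*L\otimes p_2^*L$, which is the pullback of $\mathcal{O}(1,1)$, or of $\mathcal{O}(1)$ under the Segre embedding, along $f\times f: A^2\to \bbp^n\times\bbp^n$.

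\textbf{Step 1: realize $Y_u$ inside a pullback of bounded degree.} Let $W\subseteq \bbp^n\times \bbp^n$ be the Zariski closure of the affine subvariety
\[\{(x,y)\in \bba^n\times\bba^n : x+y=u\}.\]
This is the closure of a linear subspace of dimension $n$, so it is cut out by bihomogeneous forms of bidegree $(1,1)$. Writing $x=(x_0:\dots:x_n)$, $y=(y_0:\dots:y_n)$ with $x_0,y_0$ the chart coordinates, the equations are $x_iy_0+x_0y_i-u_ix_0y_0=0$. Let $D_u := (Z\times Z)\cap W$. Then $\Phi^{-1}(u)=(f\times f)^{-1}(D_u)\cap (V\times V)$, so $Y_u$ is a union of irreducible components of $(f\times f)^{-1}(D_u)$. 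These are the components meeting $V\times V$, and their closures are contained in $(f\times f)^{-1}(D_u)$. Note that $(f\times f)^{-1}(D_u)$ is cut out in $A^2$ by $n$ sections of $L\boxtimes L$, namely the pullbacks of the bidegree-$(1,1)$ forms above.

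\textbf{Step 2: bound components and degrees.} Any subvariety of $A^2$ cut out by sections of the ample bundle $L\boxtimes L$ has a refined B\'ezout bound (Fulton's refined B\'ezout theorem, or the version for closed subschemes cut out by hypersurfaces). The sum of the degrees of its irreducible components, with respect to $L\boxtimes L$, is at most $\deg_{L\boxtimes L}(A^2)$ times $1$ per hypersurface. Here
\[\deg_{L\boxtimes L}(A^2)=\binom{2g}{g}(dt)^2,\]
since $(L\boxtimes L)^{2g} = \binom{2g}{g}(L^g)^2$. Hence both the number of components of $Y_u$ and the degree of each component are bounded by $C_0(g,d,t):=\binom{2g}{g}d^2t^2$. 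The dimension of the ambient abelian variety is $2g$.

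\textbf{Step 3: apply uniform Mordell--Lang.} By hypothesis no component $Y'$ of $Y_u$ contains a translate of a nontrivial abelian subvariety of $A^2$. The group $\Gamma^2\subseteq A^2(F)$ has rank $2r$. Theorem~\ref{uniform_Mordell_Lang} then gives
\[\lvert Y'(F)\cap\Gamma^2\rvert\le \max_{e\le C_0}c(2g,e)^{1+2r}.\]
Summing over at most $C_0$ components, we obtain
\[\lvert Y_u(F)\cap\Gamma^2\rvert\le C_0\,c'^{\,2(1+r)}\le C_1(g,d,t)^{1+r}\]
for a suitable $C_1$. Components of dimension $0$ are points and contribute at most one each; they are harmless.

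The main obstacle is Step 2. We need a rigorous uniform B\'ezout statement for the excess-intersection scheme $(f\times f)^{-1}(D_u)$, whose components may have varying dimensions and need not be a complete intersection. We must also check that taking the Zariski closure of $\Phi^{-1}(u)$ only selects components and does not increase degrees. We would first try Fulton's refined B\'ezout inequality (Example 8.4.6 in Fulton's \emph{Intersection Theory}), applied to $A^2$ embedded by a fixed multiple of $L\boxtimes L$. If $L$ is not very ample, we replace it by $L^{\otimes 3}$, which is very ample on an abelian variety. This changes the constants only by factors depending on $g$.
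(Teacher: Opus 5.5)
Your proposal is correct, and at the key step it takes a genuinely different route from the paper. That step is the uniform bound on the number and degrees of the components of $Y_u$.

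The paper applies B\'ezout only in $\bbp^n$, to $Z\cap\bar\mu_u(Z)$, so the components $D$ of $D_u=\overline{U\cap\mu_u(U)}$ have total degree at most $t^2$. It then observes that $\Gamma_u$ is the graph of $\bar\mu_u$ over $D_u$. For each component $Y$ of $Y_u$ it uses the finite map $\pi=f\circ p_1^A\colon Y\to D$ of degree at most $d^2$, together with $\mathcal L|_Y\simeq\pi^*\mathcal O_D(2)$, which holds because $f\circ p_2^A=\bar\mu_u\circ f\circ p_1^A$ on $Y$. This gives $\deg_{\mathcal L}Y\le 2^gd^2t^2$ and at most $d^2t^2$ components.

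You instead regard the components of $Y_u$ directly as components of an excess intersection on $A^2$ and invoke refined B\'ezout once. The paper's route needs only B\'ezout in projective space and yields better constants. However, it uses that every component of $(f\times f)^{-1}(\Gamma_u)$ maps onto a component of $\Gamma_u$. That follows from going-down when $f(A)$ is normal, but needs an argument in general. Your route does not depend on this, at the price of a heavier theorem and the weaker constant $3^{2g}\binom{2g}{g}d^2t^2$.

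Two details need repair.

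First, your $n$ forms $x_iy_0+x_0y_i-u_ix_0y_0$ do not cut out $W$ when $n\ge 2$. They all vanish on $H\times H$, where $H=\{x_0=0\}$ is the hyperplane at infinity, so their common zero locus is $W\cup(H\times H)$. This is harmless: $f^{-1}(H)\times f^{-1}(H)$ is disjoint from $V\times V$, so every component of $Y_u$ is still a component of the larger zero locus. Alternatively, use all $2\times 2$ minors expressing $y=\bar\mu_u(x)$, which have bidegree $(1,1)$ and cut out exactly $W$.

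Second, to make Step~2 precise, replace each section $s_i$ of $L\boxtimes L$ by $s_i^3$. This has the same zero set and is a hyperplane section of $A^2$ embedded by the complete linear system $\lvert L^{\otimes 3}\boxtimes L^{\otimes 3}\rvert$. Refined B\'ezout then bounds the sum of the $(L^{\otimes 3}\boxtimes L^{\otimes 3})$-degrees of the components by $3^{2g}\binom{2g}{g}d^2t^2$. The $(L\boxtimes L)$-degree of each component is no larger, since it equals $3^{-\dim}$ times the former.
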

\begin{proof}
    We will apply Theorem~\ref{uniform_Mordell_Lang} to an abelian variety $A^2/F$ and to each irreducible component $Y$ of $Y_u$. The polarization of $A^2$ is induced by $f$: 
    \[\mathcal L := (p_1^A)^*\mathcal{L}_0 \otimes (p_2^A)^*\mathcal{L}_0\]
    where 
    \[\mathcal{L}_0 := f^*\mathcal{O}_{\bbp^n}(1).\]

    For each $u \in \bba^n$, define 
    \[\mu_u:\bba^n \longrightarrow \bba^n,\quad x \longmapsto u-x\]
    and let $\bar \mu_u:\bbp^n \rightarrow \bbp^n$ be its projective extension. Then $\bar \mu_u$ is a projective automorphism of $\bbp^n$. 
    
    Recall that $Z=f(A)$ and $U = Z \cap \bba^n$. Define 
    \[C_u := Z \cap \bar \mu_u(Z)\subseteq \bbp^n,\quad D_u := \overline{U \cap \mu_u(U)} \subseteq \bbp^n.\]
    Then 
    \[C_u \cap \bba^n = U \cap \mu_u(U),\]
    so $D_u$ is exactly the union of the irreducible components of $C_u$ which meet $\bba^n$. 

    Since $\bar \mu_u$ is a projective automorphism of $\bbp^n$, 
    \[\deg(\bar \mu_u(Z))=\deg(Z)=t.\]
    By the generalized B\'{e}zout theorem, 
    \[\sum_{D \in \mathrm{Irr}(D_u)} \deg(D) \leq \sum_{C \in \mathrm{Irr}(C_u)} \deg(C) \leq t^2.\]
    In particular, each irreducible component $D$ of $D_u$ satisfies
    \[\deg(D) \leq t^2,\]
    and the number of irreducible components of $D_u$ is at most $t^2$.
    
    Next define 
    \[\Gamma_u:=\overline{\{(x,y)\in U^2 \mid x+y=u\}} \subseteq \bbp^n \times \bbp^n.\]
    The set under the closure is the graph of $\mu_u$ on $U \cap \mu_u(U)$. Therefore, $\Gamma_u$ is the graph of $\bar \mu_u$ on $D_u$:
    \[\Gamma_u = \{(x,\bar \mu_u(x))\:|\:x \in D_u\}.\]
    In particular, the first projection induces an isomorphism $p_1:\Gamma_u \cong D_u$. Moreover,
    \[(f\times f)^{-1}(\Gamma_u) \cap (V \times V) = \Phi^{-1}(u),\]
    so $Y_u$ is the union of the irreducible components of $(f\times f)^{-1}(\Gamma_u)$ which meet $V\times V$.
    
    Now let $Y$ be an irreducible component of $Y_u$. Then $Y$ is an irreducible component of $(f\times f)^{-1}(\Gamma_u)$, so its image under $f \times f$ is an irreducible component $\Delta$ of $\Gamma_u$, and this corresponds to an irreducible component $D$ of $D_u$ via $p_1:\Gamma_u \cong D_u$. 
    
    Consider the morphism
    \[\pi:= p_1 \circ (f \times f) = f \circ p_1^A : Y \longrightarrow D.\]
    Then the morphism $\pi$ is finite and $\deg(\pi) \leq d^2$. Indeed, for generic $x \in D \cap \bba^n$, the fiber $\pi^{-1}(x)$ is contained in
    \[f^{-1}(x)\times f^{-1}(u-x),\]
    hence has cardinality at most $d^2$.
    
    Recall that
    \[\mathcal{L}_0 := f^*\mathcal{O}_{\bbp^n}(1),\quad \mathcal L := (p_1^A)^*\mathcal{L}_0 \otimes (p_2^A)^*\mathcal{L}_0.\]
    On $\Delta$, the two projections $p_1$ and $p_2$ satisfy 
    \[p_2 = \bar\mu_u \circ p_1.\]
    Therefore, on $Y$, 
    \[f \circ p_2^A = p_2 \circ (f \times f) = \bar \mu_u \circ p_1 \circ (f \times f) = \bar \mu_u \circ f \circ p_1^A.\]
    Since $\bar\mu_u^*\mathcal O_{\bbp^n}(1)\simeq \mathcal O_{\bbp^n}(1)$, we obtain 
    \[\left.(p_1^A)^*\mathcal L_0\right|_Y \simeq \left.(p_2^A)^*\mathcal L_0\right|_Y \simeq \pi^*\mathcal O_D(1).\]
    Hence
    \[\left.\mathcal L\right|_Y=\left.(p_1^A)^*\mathcal L_0\right|_Y\otimes \left.(p_2^A)^*\mathcal L_0\right|_Y \simeq \pi^*\mathcal O_D(2).\]
    The dimension of $Y$ and $D$ is equal to $m \leq g$. Therefore
    \begin{align*}
        \deg_{\mathcal L}(Y) &= (\mathcal L|_Y)^m = (\pi^*\mathcal O_D(2))^m = \deg(\pi)\cdot \deg_{\mathcal O_{\bbp^n}(2)}(D) \\
        &= \deg(\pi)\cdot 2^m \deg(D) \leq 2^gd^2t^2.
    \end{align*}
    Thus every irreducible component of $Y_u$ has degree at most $2^gd^2t^2$.
    
    We also need a uniform bound for the number of irreducible components of $Y_u$. For each irreducible component $D$ of $D_u$, there are at most $d^2$ irreducible
    components of $Y_u$ over $D$ because $\deg(\pi) \leq d^2$. Therefore, the number of irreducible components of $Y_u$ is at most $d^2t^2$.

    At this point all geometric quantities appearing in Theorem~\ref{uniform_Mordell_Lang} are bounded independently of $u$. We now finish the proof. Suppose $Y_u$ does not contain any translate of a nontrivial abelian subvariety of $A^2$. Then the same is true for the irreducible component $Y$ of $Y_u$. By Theorem~\ref{uniform_Mordell_Lang} with the above degree calculation, we obtain 
    \[\lvert Y(F) \cap \Gamma^2 \rvert \leq c(2g,2^gd^2t^2)^{1+2r}.\]
    As $Y_u$ has at most $d^2t^2$ irreducible components, we conclude that 
    \[\lvert Y_u(F) \cap \Gamma^2 \rvert \leq d^2t^2 \cdot c(2g,2^gd^2t^2)^{1+2r} \leq C_1(g,d,t)^{1+r}.\]
\end{proof}

\section{Finiteness of $\Sigma$}

In this section, we prove Proposition~\ref{C_2_bound}, which corresponds to the bound \eqref{Sigma_finite}. We prove that $u \in \bba^n$ for which $Y_u$ contains a translate of a nontrivial abelian subvariety form a finite set. The proof heavily uses the simplicity of $A$ to obtain a rigid description of such subvarieties in $A^2$.

\begin{proposition}\label{C_2_bound}
    Define $\Sigma$ to be the set of $u \in \bba^n$ such that $Y_u$ contains a translate of a nontrivial abelian subvariety. Then $\Sigma$ is finite and 
    \[\lvert \Sigma \rvert \leq C_2(g,d,t).\]
\end{proposition}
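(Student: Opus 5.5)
We classify the possible abelian subvarieties, turn each containment into a functional equation, and then show that such an equation forces $u$ into a finite set of bounded size.

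\textbf{Step 1: classification.} Suppose $Y_u$ contains a translate $w+B$ of a nontrivial abelian subvariety $B \subseteq A^2$. Since $A$ is simple, $B$ is either $A^2$ itself or of the form $A(\alpha,\beta)=\{(\alpha P,\beta P)\}$ with $\alpha,\beta \in \End(A)$ not both zero.

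The case $B=A^2$ is impossible. Indeed $\dim Y_u \le g$, because $\pi:Y\to D$ is finite and $D\subseteq Z$ has dimension at most $g$.

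So $B=A(\alpha,\beta)$ has dimension $g$, and $w+B$ is an irreducible component of $Y_u$ of dimension $g$. This forces the corresponding component $D$ of $D_u$ to be all of $Z$, so $\bar\mu_u(Z)=Z$. Restricting to the dense open set where both coordinates lie in $V$, we get the identity
\[f(a+\alpha P)+f(b+\beta P)=u \quad \text{for all } P \text{ in a dense open subset of } A,\]
where $w=(a,b)$.

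\textbf{Step 2: the pairs $(\alpha,\beta)$.} Both $\alpha$ and $\beta$ are nonzero. If, say, $\beta=0$, then $f(a+\alpha P)$ would be constant on a dense set. But $\alpha\neq 0$ is an isogeny because $A$ is simple, so $P\mapsto a+\alpha P$ is surjective, and $f$ is finite, so $f(a+\alpha P)$ cannot be constant. Hence $\alpha,\beta$ are both isogenies.

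Next I would normalize. Composing with $\alpha$, and possibly enlarging the component, the identity says $f(Q)+f(\sigma(Q))=u$ on a dense set. Here $\sigma$ is the correspondence $Q\mapsto b+\beta\alpha^{-1}(Q-a)$, which is a genuine map up to the finite ambiguity $\ker\alpha$.

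The key observation is that $x\mapsto u-x$ is then a finite-to-finite self-correspondence of $Z$. It satisfies $\bar\mu_u(Z)=Z$, and $\bar\mu_u$ is an involutive affine map preserving $U$.

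\textbf{Step 3: counting $u$.} I would show that the set $G=\{u \in \bba^n : \bar\mu_u(Z)=Z\}$ is small. If $u,u'\in G$, then $\bar\mu_{u'}\circ\bar\mu_u$ is the translation $x\mapsto x+(u'-u)$, and it preserves $Z$. So the differences $G-G$ form a group $T$ of translations of $\bba^n$ preserving the affine variety $U$, and $G$ is a single coset of $T$ or empty. Thus $|\Sigma|\le |G| = |T|$.

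It remains to show that $T$ is finite of bounded size. The group of translations preserving $U$ is a closed algebraic subgroup of $\bba^n$. In characteristic $0$ any nontrivial such subgroup contains a line, so if $T$ were infinite, $U$ would be stable under a one-parameter group of translations $x\mapsto x+sv$, $s\in F$. Then the closure $Z$ would contain the line through $x$ in direction $v$, for every $x\in U$. Consequently $A$, via $f$, would admit a nonconstant map from $\bba^1$, or at least rational curves dominated by these lines through the finite map $f$. This contradicts the fact that an abelian variety contains no rational curves.

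Hence $T$ is finite, and $T$ is torsion in a characteristic $0$ vector group, which is torsion-free. So $T=0$ and $|G|\le 1$. Unless I have mis-handled some component issue, this gives $|\Sigma|\le 1$, comfortably $\le C_2(g,d,t)$.

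\textbf{Main obstacle.} The delicate step is Step 1's passage from ``$Y_u$ contains $w+B$'' to ``$\bar\mu_u(Z)=Z$''. One must check that $w+B$ has full dimension $g$, which holds because $\dim B=g$ for every proper nontrivial $B$ when $A$ is simple. One must also check that it meets $V\times V$, which holds since $Y_u$ is the closure of $\Phi^{-1}(u)$ and its components meet $V\times V$. If some component subtlety arises, I would fall back to bounding $\Sigma$ by the number of $g$-dimensional components $D=Z$ of $D_u$ together with the B\'ezout bound $t^2$, giving a constant depending only on $t$.
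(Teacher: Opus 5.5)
Your reduction to the functional equation (Step 1 and the first half of Step 2) is fine. In fact, noting that every component of $Y_u=\overline{\Phi^{-1}(u)}$ meets $V\times V$ is a cleaner route than the paper's. The gap is in Step 3: the only information you carry into it is $\bar\mu_u(Z)=Z$, and that is far too weak.

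The claim that $\{u\in\bba^n:\bar\mu_u(Z)=Z\}$ has at most one element already fails for the paper's motivating example. Take $A=E$ an elliptic curve and $f=x:E\to\bbp^1$. Then $Z=\bbp^1$ and $U=\bba^1$, so $\bar\mu_u(Z)=Z$ for every $u\in\bba^1$ and your $T$ is all of $\bba^1$. The same happens whenever $f$ is finite onto $\bbp^g$.

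Your justification breaks at ``$A$ would admit rational curves''. A line $L\subseteq Z$ does not lift to $A$. Its preimage $f^{-1}(L)$ is a curve mapping finitely onto $L\cong\bbp^1$, and such a curve can have positive genus; here it is $E$ itself. Curves in $A$ dominate lines in $Z$, not the other way round, and that is no obstruction. Your fallback does not help either: B\'ezout bounds the number of components of $D_u$ for a single $u$, not the number of $u$ with $Z\subseteq D_u$.

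What is missing is a rigidity argument carried out on the abelian variety, using the functional equation itself rather than its shadow in $\bbp^n$. The paper proceeds in four steps:
\begin{enumerate}
\item It quotients by the translation stabilizer of $f$ to get $g:B\to\bbp^n$ with trivial stabilizer.
\item It shows $\ker\bar\alpha=\ker\bar\beta$, so the equation becomes $g(Q)+g(c+\gamma Q)\equiv u$ with $\gamma\in\Aut(B)$.
\item It compares polar divisors $D=g^*H$ to get $(\tau_c\circ\gamma)^*D=D$. This forces $\gamma\in\Aut(B,[\mathcal L_B])$, a finite group, and determines $c$ modulo the finite group $K(\mathcal L_B)$.
\item Finally, $u$ is determined by $(\gamma,c)$.
\end{enumerate}

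Your conclusion $\lvert\Sigma\rvert\le 1$ can in fact be recovered from this setup. Two such identities for $u$ and $u'$ give $g\circ\phi=g+(u-u')$, where $\phi(R)=c+\gamma\gamma'^{-1}(R-c')$. Since $\gamma\gamma'^{-1}$ has finite order $k$, the power $\phi^k=\tau_e$ is a translation, so $\tau_e^*D=D$. Hence $e\in K(\mathcal L_B)$ is torsion, and iterating $g\circ\tau_e=g+k(u-u')$ gives $u=u'$ in characteristic $0$. This uses exactly the divisor argument on $B$ that your Step 3 bypasses.
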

\begin{proof}
    Since $A$ is simple, a nontrivial proper abelian subvariety of $A^2$ is given by the following form:
    \[A(\alpha,\beta) := \{(\alpha P,\beta P)\:|\:P \in A\},\quad \alpha,\beta \in \End(A),\quad (\alpha,\beta) \neq (0,0).\]
    
    We begin the proof by translating the condition $u \in \Sigma$ into a functional equation of the form 
    \[f(a+\alpha P) + f(b+\beta P) \equiv u,\]
    where $a,b \in A$ and $\alpha,\beta \in \End(A)$ with $\alpha,\beta$ both nonzero.
    
    Suppose $u \in \Sigma$. Then there exist $\alpha,\beta \in \End(A)$ and $a,b \in A$ such that $(\alpha,\beta) \neq (0,0)$ and 
    \[(a,b) + A(\alpha,\beta) \subseteq Y_u.\]
    We prove that $(a,b)+A(\alpha,\beta)$ must intersect $V \times V$.
    
    Assume $(a,b)+A(\alpha,\beta)$ is disjoint from $V \times V$. If $\alpha \neq 0$ and $\beta \neq 0$, then both are nonzero isogenies. Therefore, under the projections $p_1$ and $p_2$, $(a,b)+A(\alpha,\beta)$ map onto $A$. Since $V$ is an affine open set of $A$, there exists $P \in A$ such that $a+\alpha P,b+\beta P$ are both contained in $V$. This is a contradiction, so $\alpha=0$ or $\beta=0$. Without loss of generality, assume $\alpha=0$. Let $H$ be the hyperplane at infinity in $\bbp^n$. Recall the projective automorphism $\bar\mu_u$ of $\bbp^n$. Since $\mu_u:x \mapsto u-x$ on $\bba^n$, $\bar\mu_u$ is the identity on $H$. Hence if $(P,Q) \in Y_u$ and $(P,Q) \notin V \times V$, then $f(P)=f(Q)$. Since $(a,b)+A(0,\beta) \subseteq Y_u$ but $(a,b)+A(0,\beta)$ is disjoint from $V \times V$, we conclude that $f$ is constant, which is a contradiction.

    Since $(a,b)+A(\alpha,\beta)$ intersects with $V \times V$, we have 
    \[f(a+\alpha P) + f(b+\beta P) \equiv u\]
    as rational functions. If $\alpha=0$, then $f(b+\beta P)$ is constant and $\beta \neq 0$, hence $f$ is constant, which is a contradiction. Thus $\alpha \neq 0$ and by symmetry, $\beta \neq 0$. It follows that $\alpha$ and $\beta$ are both nonzero isogenies.

    We now remove the translation symmetries of $f$ by passing to the quotient by its stabilizer. Let 
    \[G := \{t \in A \:|\: f \circ \tau_t = f\}\]
    be the translation stabilizer of $f$. Since $f$ is finite, $G$ is finite. Let $B=A/G$, $q:A \rightarrow B$ be the quotient morphism, and let $f = g \circ q$. Then $g:B \rightarrow \bbp^n$ has a trivial translation stabilizer.
    
    Passing to the quotient $q:A \rightarrow B$ gives 
    \[g(\bar a+\bar \alpha P)+g(\bar b+\bar \beta P)\equiv u,\]
    where $\bar \alpha,\bar \beta:A \rightarrow B$ are isogenies. If $T \in \ker \bar \alpha$, then comparing the functional equation at $P$ and $P+T$ yields
    \[g(\bar b+\bar \beta(P+T)) \equiv g(\bar b+\bar\beta P).\]
    Since $g$ has a trivial translation stabilizer, this implies $\bar \beta(T)=0$. Hence $\ker \bar \alpha \subseteq \ker \bar \beta$, and by symmetry, $\ker \bar \alpha=\ker \bar \beta$. Therefore there exists $\gamma \in \Aut(B)$ such that
    \[\bar \beta=\gamma \bar \alpha.\]
    
    Set $Q:=\bar a+\bar \alpha P$. Then $Q$ runs through $B$, and the functional equation becomes
    \[g(Q)+g(c+\gamma Q)\equiv u\]
    for some $c \in B$.
    
    Let $\mathcal{L}_B=g^*\mathcal O_{\bbp^n}(1)$ be the polarization of $B$. Recall that we have chosen affine coordinate $\bba^n$ in $\bbp^n$. Let $H$ be the hyperplane at infinity and define 
    \[D := g^*H \sim \mathcal{L}_B.\]
    
    From
    \[g(P)+g(c+\gamma P)\equiv u,\]
    by considering the pole divisors of affine coordinate of $g$, we obtain 
    \[(\tau_c\circ[\gamma])^*D=D.\]
    Passing to divisor classes in the Neron-Severi group $\mathrm{NS}(B)$, we obtain
    \[\gamma^*[D]=[D].\]
    Since $D \sim \mathcal{L}_B$, this yields
    \[\gamma^*[\mathcal{L}_B]=[\mathcal{L}_B].\]
    Therefore 
    \[\gamma \in \Aut(B,[\mathcal{L}_B]).\]

    It remains to bound the number of possible translations $c$ for a fixed automorphism $\gamma \in \Aut(B,[\mathcal{L}_B])$. Fix such a $\gamma$. If both $c$ and $d$ satisfy
    \[g(P)+g(c+\gamma P)\equiv u,\qquad g(P)+g(d+\gamma P)\equiv v,\]
    then we obtain 
    \[(\tau_c\circ[\gamma])^*D=(\tau_{d}\circ[\gamma])^*D=D.\]
    Hence
    \[\tau_{c-d}^*D=D,\]
    so
    \[c-d \in \{t\in B\mid \tau_t^*D=D\} \subseteq \{t\in B\mid \tau_t^*\mathcal{L}_B\simeq \mathcal{L}_B\} =: K(\mathcal{L}_B).\]
    Since $\mathcal{L}_B$ is ample, $K(\mathcal{L}_B)$ is finite. Therefore, for fixed $\gamma$, the number of $c$ that can occur is at most $\lvert K(\mathcal{L}_B) \rvert$.
    
    We have proven that for fixed $\gamma$, the number of $u \in \Sigma$ that can occur is at most $\lvert K(\mathcal{L}_B) \rvert$. By \cite[Proposition~17.5]{Mil86}, $\Aut(B,[\mathcal{L}_B])$ is finite, so the number of $\gamma$ that can occur is $\lvert \Aut(B,[\mathcal{L}_B]) \rvert$. We conclude that the set $\Sigma$ is finite and 
    \[\lvert \Sigma \rvert \leq \lvert \Aut(B,[\mathcal{L}_B]) \rvert\lvert K(\mathcal{L}_B) \rvert.\]
    
    Now the next two lemmas end the proof of the proposition. 
\end{proof}

\begin{lemma}
    We have 
    \[\lvert \Aut(B,[\mathcal L_B]) \rvert \leq \lvert \mathrm{GL}_{2g}(\bbf_3) \rvert \leq 3^{4g^2}.\]
\end{lemma}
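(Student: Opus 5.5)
The plan is to show that $\Aut(B,[\mathcal L_B])$ acts faithfully on the $3$-torsion subgroup $B[3]\cong(\bbz/3\bbz)^{2g}$ (recall $F$ is algebraically closed of characteristic $0$). This gives an injection
\[\Aut(B,[\mathcal L_B]) \hookrightarrow \Aut(B[3]) \cong \mathrm{GL}_{2g}(\bbf_3).\]
The second inequality is then trivial, since $\lvert \mathrm{GL}_{2g}(\bbf_3)\rvert \leq \lvert M_{2g}(\bbf_3)\rvert = 3^{4g^2}$.

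\textbf{Step 1: finite order.} By \cite[Proposition~17.5]{Mil86}, the group $\Aut(B,[\mathcal L_B])$ is finite, so each $\gamma$ in it has finite order. Concretely, $\gamma$ preserves the positive definite Rosati form attached to the polarization $\lambda_{\mathcal L_B}$ on $\End(B)\otimes\mathbb R$. So $\gamma$ lies in a compact group intersected with a lattice, and hence $\gamma^N=1$ for some $N$.

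\textbf{Step 2: the kernel of reduction mod $3$.} Suppose $\gamma$ acts trivially on $B[3]$. Then $\gamma-1$ kills $B[3]$, so $\gamma-1 = 3\delta$ for some $\delta\in\End(B)$, since any endomorphism killing $B[n]$ factors through multiplication by $n$. It is cleanest to pass to the faithful representation on $T_\ell$, or on $H_1(B,\bbz)\cong\bbz^{2g}$ (analytically over $\bbc$, or via $T_3 B$ in general). There $\gamma$ is an integer matrix $M$ of finite order with $M\equiv I \pmod 3$.

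We then apply the classical Minkowski/Serre lemma: a finite-order element of $\mathrm{GL}_{2g}(\bbz_3)$ congruent to $I$ mod $3$ is the identity. The argument runs as follows. If $M\neq I$, replace $M$ by a suitable power so that $M$ has prime order $p$, and write $M=I+3^k X$ with $X\not\equiv 0 \pmod 3$, $k\geq 1$. Expanding
\[M^p = I + p\,3^k X + \binom p2 3^{2k}X^2+\cdots = I\]
and comparing $3$-adic valuations gives a contradiction. For $p\neq 3$ the linear term dominates. For $p=3$ the linear term has valuation $k+1$, while the remaining terms have valuation at least $\min(2k+1,3k)>k+1$, using $k\geq1$; the only borderline case, $3k=k+1$, cannot occur.

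\textbf{Main obstacle.} This valuation bookkeeping in the case $p=3$ is the main obstacle; it is exactly why the modulus is $3$ rather than $2$. The rest is formal. Faithfulness of $\End(B)\to\End(T_3B)$ transfers the conclusion $M=I$ back to $\gamma=1$, which proves injectivity.
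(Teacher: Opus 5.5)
Your proof is correct and follows the same route as the paper. Both show that $\Aut(B,[\mathcal L_B])$ acts faithfully on $B[3]\cong(\bbz/3\bbz)^{2g}$, which gives an injection into $\mathrm{GL}_{2g}(\bbf_3)$ and hence the bound $3^{4g^2}$.

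The only difference is in the faithfulness step. The paper simply cites Milne's Proposition~17.5 for the fact that an automorphism of a polarized abelian variety acting trivially on $B[3]$ is the identity. You instead prove this from the finiteness of the group together with the Minkowski--Serre lemma for $\mathrm{GL}_{2g}(\bbz_3)$. Your $3$-adic valuation check, including the case $p=3$, is correct.
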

\begin{proof}
    Since $F$ is an algebraically closed field of characteristic 0, we have
    \[B[3] := B[3](F) \cong (\bbz/3\bbz)^{2g}.\]
    By \cite[Proposition~17.5]{Mil86}, any automorphism in $\Aut(B,[\mathcal L_B])$ acting trivially on $B[3]$ is the identity. Therefore the natural action of $\Aut(B,[\mathcal L_B])$ on $B[3]$ is faithful, and hence
    \[\Aut(B,[\mathcal L_B]) \hookrightarrow \Aut(B[3]) \cong \mathrm{GL}_{2g}(\bbf_3).\]
\end{proof}

\begin{lemma}
    We have 
    \[\lvert K(\mathcal L_B) \rvert \leq d^2t^2.\]
\end{lemma}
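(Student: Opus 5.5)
The plan is to compute $\lvert K(\mathcal L_B)\rvert$ exactly, using the Riemann--Roch theorem on abelian varieties together with the formula for the degree of the polarization isogeny, and then to bound the self-intersection number $(\mathcal L_B^g)$ by $dt$.

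First, I will check that $\mathcal L_B = g^*\mathcal O_{\bbp^n}(1)$ is ample. This holds because $g$ is finite onto its image: $f = g\circ q$ is finite, and $q$ is surjective. Hence $\phi_{\mathcal L_B}: B \to B^\vee$, $x \mapsto \tau_x^*\mathcal L_B \otimes \mathcal L_B^{-1}$, is an isogeny with kernel $K(\mathcal L_B)$. Since $\mathrm{char} F = 0$, $\phi_{\mathcal L_B}$ is separable, so
\[\lvert K(\mathcal L_B)\rvert = \deg \phi_{\mathcal L_B}.\]
By the Riemann--Roch theorem for abelian varieties (Mumford), $\deg\phi_{\mathcal L_B} = \chi(\mathcal L_B)^2$ and $\chi(\mathcal L_B) = (\mathcal L_B^g)/g!$. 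Therefore
\[\lvert K(\mathcal L_B)\rvert = \left(\frac{(\mathcal L_B^g)}{g!}\right)^2 \leq (\mathcal L_B^g)^2.\]

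Second, I will compute $(\mathcal L_B^g)$ by the projection formula for the finite surjective morphism $g: B \to Z$. Note that $\dim Z = g$, since $f$ is finite onto $Z$. The projection formula gives
\[(\mathcal L_B^g) = (g^*\mathcal O(1))^g = \deg(g)\cdot (\mathcal O_{\bbp^n}(1)|_Z)^g = \deg(g)\cdot t.\]
Since $f = g\circ q$, degrees multiply: $d = \deg f = \deg q\cdot \deg g = \lvert G\rvert \deg g$. In particular $\deg g \leq d$, so $(\mathcal L_B^g) \leq dt$. Combining the two steps gives $\lvert K(\mathcal L_B)\rvert \leq (dt)^2/(g!)^2 \leq d^2t^2$.

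The only point needing care is the use of Riemann--Roch and of $\deg\phi_{\mathcal L} = \chi(\mathcal L)^2$, together with separability in characteristic $0$, which identifies the order of the kernel with the degree of $\phi_{\mathcal L_B}$. These are standard results (Mumford, \emph{Abelian Varieties}, \S16). The degree bookkeeping $d = \lvert G\rvert\deg g$ is routine. Alternatively, one could bypass $B$ and argue with $f^*\mathcal O(1)$ on $A$ directly, since $(\mathcal L_0^g) = dt$ there as well. However, the argument above already handles $B$ directly and gives the stated bound.
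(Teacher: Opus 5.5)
Your proof is correct and follows essentially the same route as the paper. Both arguments use $\lvert K(\mathcal L_B)\rvert = \deg\phi_{\mathcal L_B} = \bigl((\mathcal L_B^g)/g!\bigr)^2$ and then show $(\mathcal L_B^g) = dt/\lvert G\rvert \leq dt$; the only cosmetic difference is that you get this from the projection formula for $g\colon B\to Z$ together with $d=\lvert G\rvert\deg g$, whereas the paper pulls back along $q$ and uses $\lvert G\rvert(\mathcal L_B^g) = (\mathcal L_0^g) = dt$.
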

\begin{proof}
    Fix a polarization isogeny $\lambda:B \rightarrow \hat{B}$ associated to $\mathcal L_B$. Recall that 
    \[K(\mathcal L_B) = \ker(\lambda).\]
    Since $\lambda$ is an isogeny associated to $\mathcal L_B$, by \cite[Theorem~13.3]{Mil86}
    \[\lvert K(\mathcal L_B) \rvert=\deg(\lambda)=\left(\frac{\mathcal L_B^g}{g!}\right)^2.\]
    
    Recall that $f=g \circ q$ and $q:A \rightarrow B$ is the quotient with kernel $G$. Hence
    \[q^*\mathcal L_B=f^*\mathcal O_{\bbp^n}(1)=\mathcal L_0.\]
    Since $q$ is finite of degree $\lvert G \rvert$, we obtain
    \[\lvert G \rvert\mathcal L_B^g=(q^*\mathcal L_B)^g=\mathcal L_0^g.\]
    Since $f:A \rightarrow f(A)$ has degree $d$ and $f(A)$ has projective degree $t$ in $\bbp^n$, 
    \[\mathcal L_0^g = dt.\]
    Therefore, 
    \[\lvert K(\mathcal L_B) \rvert =\left(\frac{\mathcal L_B^g}{g!}\right)^2 = \left(\frac{dt}{g!\lvert G \rvert}\right)^2 \leq d^2t^2.\]
\end{proof}

\section{Proof of Theorem~\ref{main_theorem} and Theorem~\ref{product_theorem}}

In this section, we complete the proof of Theorem~\ref{main_theorem}. We then introduce weighted generalization of Theorem~\ref{main_theorem}, and finally prove Theorem~\ref{product_theorem} by induction.

We first prove Theorem~\ref{main_theorem}. The theorem directly follows from the results of the last three sections.

\begin{theorem}\label{main_theorem_re}
    Let $A/F$ be a simple abelian variety of dimension $g$ over an algebraically closed field $F$ of characteristic 0. Let $f:A \rightarrow \bbp^n$ be a morphism which is finite of degree $d$ onto its image, and let $t$ denote the projective degree of $f(A)$ in $\bbp^n$. Let $\Gamma$ be a subgroup of $A(F)$ of finite rank $r$. Then there exists a constant $C(g,d,t)>0$ with the following property.
    
    For every affine chart $\bba^n \subseteq \bbp^n$ and every finite subset $X \subseteq f(\Gamma) \cap \bba^n$, 
    \[E(X) \leq C(g,d,t)^{1+r}\lvert X \rvert^2,\qquad \lvert X+X \rvert \geq \left(C(g,d,t))^{-1}\right)^{1+r}\lvert X \rvert^2.\]
\end{theorem}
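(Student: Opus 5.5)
We combine the two geometric inputs, Proposition~\ref{C_1_bound} and Proposition~\ref{C_2_bound}, with the trivial estimate \eqref{N_u_bound}. By the Remark following Theorem~\ref{main_theorem}, i.e. by \eqref{Cauchy_Schwarz_X}, the sumset bound follows from the energy bound. So it suffices to prove $E(X)\le C(g,d,t)^{1+r}\lvert X\rvert^2$.

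Fix an affine chart and a finite $X\subseteq f(\Gamma)\cap\bba^n$. Since $X\subseteq \bba^n$, which is a torsion-free group under addition, we may write
\[E(X)=\sum_{u\in X+X}N_u^2,\qquad \sum_{u} N_u=\lvert X\rvert^2.\]
Let $\Sigma$ be the exceptional set of Proposition~\ref{C_2_bound}. We split the sum over $u$ according to whether $u\in\Sigma$.

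For $u\in\Sigma$, we use $N_u\le \lvert X\rvert$ from \eqref{N_u_bound}. The total contribution is then at most $\lvert\Sigma\rvert\,\lvert X\rvert^2\le C_2(g,d,t)\lvert X\rvert^2$.

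For $u\notin\Sigma$, the scheme $Y_u$ contains no translate of a nontrivial abelian subvariety of $A^2$. Proposition~\ref{C_1_bound} therefore gives
\[N_u\le M_u\le \lvert Y_u(F)\cap\Gamma^2\rvert\le C_1(g,d,t)^{1+r}.\]
Hence
\[\sum_{u\notin\Sigma}N_u^2\le C_1(g,d,t)^{1+r}\sum_u N_u= C_1(g,d,t)^{1+r}\lvert X\rvert^2.\]

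Adding the two contributions gives
\[E(X)\le \bigl(C_1(g,d,t)^{1+r}+C_2(g,d,t)\bigr)\lvert X\rvert^2.\]
Put $C(g,d,t):=C_1(g,d,t)+C_2(g,d,t)$, after enlarging so that $C\ge 1$. Then $C_1^{1+r}+C_2\le C^{1+r}$, since $C^{1+r}\ge C_1^{1+r}+C_2^{1+r}$ and $C_2\le C_2^{1+r}$ whenever $C_2\ge1$; we may enlarge $C_2$ to be at least $1$ without harm. Finally, \eqref{Cauchy_Schwarz_X} gives
\[\lvert X+X\rvert\ge \lvert X\rvert^4/E(X)\ge C^{-(1+r)}\lvert X\rvert^2.\]

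There is no real obstacle here. The only points to check are that the constants produced by Propositions~\ref{C_1_bound} and \ref{C_2_bound} depend only on $(g,d,t)$, not on the chart or on $u$, and the elementary bookkeeping needed to absorb $C_2$ into a power $C^{1+r}$. Both propositions already give bounds uniform in the affine chart, since the degree bounds in their proofs were chart-independent.
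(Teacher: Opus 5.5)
Your proposal is correct and is essentially the paper's proof: split $E(X)=\sum_u N_u^2$ according to whether $u\in\Sigma$, bound the exceptional terms by $\lvert\Sigma\rvert\,\lvert X\rvert^2$ and the remaining terms by $C_1(g,d,t)^{1+r}\sum_u N_u=C_1(g,d,t)^{1+r}\lvert X\rvert^2$, then use \eqref{Cauchy_Schwarz_X} for the sumset bound. Your explicit choice $C:=C_1+C_2$ (with $C_1,C_2\ge 1$), which absorbs $C_2$ into $C^{1+r}$, spells out a step the paper leaves implicit.
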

\begin{proof}
    The bound \eqref{N_u_bound} implies 
    \[E(X) = \sum_{u \in X+X} N_u^2 = \sum_{\substack{u \in X+X \\ u \notin \Sigma}} N_u^2 + \sum_{\substack{u \in X+X \\ u \in \Sigma}} N_u^2 \leq \sum_{\substack{u \in X+X \\ u \notin \Sigma}} M_uN_u + \sum_{\substack{u \in X+X \\ u \in \Sigma}} \lvert X \rvert^2.\]
    By \eqref{Y_u_bounded}, the first sum is bounded by 
    \[C_1(g,d,t)^{1+r} \sum_{\substack{u \in X+X \\ u \notin \Sigma}} N_u \leq C_1(g,d,t)^{1+r} \lvert X \rvert^2\]
    and by \eqref{Sigma_finite}, the second sum is bounded by 
    \[C_2(g,d,t)\lvert X \rvert^2.\]
    Hence, the whole energy is bounded by 
    \[C(g,d,t)^{1+r}\lvert X \rvert^2.\]
    The lower bound for $\lvert X+X \rvert$ follows from \eqref{Cauchy_Schwarz_X}.
\end{proof}

We next introduce the weighted generalization of Theorem~\ref{main_theorem}. This generalization is needed in the proof of Theorem~\ref{product_theorem}.

\begin{theorem}\label{weighted_theorem}
    Let $A/F$ and $f:A \rightarrow \bbp^n$ be as in Theorem~\ref{main_theorem_re}, and let $\Gamma$ be a subgroup of $A(F)$ of finite rank $r$. Then for every affine chart $\bba^n \subseteq \bbp^n$, every finite subset $X \subseteq f(\Gamma) \cap \bba^n$, and every weight $w:X \rightarrow [0,\infty)$, 
    \[\sum_{u \in X+X} \left( \sum_{\substack{a+b=u \\ a,b \in X}} w(a)w(b) \right)^2 \leq C(g,d,t)^{1+r}\left(\sum_{a \in X} w(a)^2\right)^2.\]
\end{theorem}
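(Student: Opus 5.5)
We mimic the proof of Theorem~\ref{main_theorem_re}, replacing counts by weighted counts. For $u \in X+X$ write
\[W_u := \sum_{\substack{a+b=u \\ a,b \in X}} w(a)w(b),\qquad S := \sum_{a\in X} w(a)^2 .\]
We use the same exceptional set $\Sigma$ as in Proposition~\ref{C_2_bound}, with $\lvert\Sigma\rvert \le C_2(g,d,t)$. The sum $\sum_u W_u^2$ splits into the part over $u\notin\Sigma$ and the part over $u\in\Sigma$.

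\emph{Exceptional part.} For $u\in\Sigma$, Cauchy--Schwarz gives
\[W_u \le \Bigl(\sum_{a} w(a)^2\Bigr)^{1/2}\Bigl(\sum_{a} w(u-a)^2\Bigr)^{1/2} \le S,\]
since $a\mapsto u-a$ is injective. Hence $W_u^2\le S^2$, and the total contribution from $\Sigma$ is at most $C_2(g,d,t)\,S^2$.

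\emph{Generic part.} For $u\notin\Sigma$, the number of pairs $(a,b)\in X^2$ with $a+b=u$ is $N_u\le M_u\le C_1(g,d,t)^{1+r}$ by \eqref{N_u_bound} and Proposition~\ref{C_1_bound}. Cauchy--Schwarz over these pairs gives
\[W_u^2 \le N_u\sum_{a+b=u} w(a)^2w(b)^2 \le C_1(g,d,t)^{1+r}\sum_{a+b=u} w(a)^2w(b)^2 .\]
Summing over $u$, every ordered pair $(a,b)$ appears exactly once, so
\[\sum_{u\notin\Sigma} W_u^2 \le C_1(g,d,t)^{1+r}\sum_{a,b} w(a)^2w(b)^2 = C_1(g,d,t)^{1+r}S^2 .\]

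Adding the two parts gives
\[\sum_{u\in X+X} W_u^2 \le \bigl(C_1(g,d,t)^{1+r}+C_2(g,d,t)\bigr)S^2 \le C(g,d,t)^{1+r}S^2,\]
where $C(g,d,t)$ is enlarged if necessary, in the same way as in Theorem~\ref{main_theorem_re}. No step here is a real obstacle: the geometry is already contained in Propositions~\ref{C_1_bound} and~\ref{C_2_bound}. The only thing to watch is the choice of Cauchy--Schwarz in each part, namely pairing $w(a)$ with $w(u-a)$ for exceptional $u$ and using the pair count $N_u$ for generic $u$, so that both parts come out as multiples of $S^2$ rather than of $\lvert X\rvert^2$.
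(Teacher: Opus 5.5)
Your proposal is correct and follows the paper's proof essentially line by line. You split the sum according to whether $u\in\Sigma$. For $u\notin\Sigma$ you apply Cauchy--Schwarz against the pair count $N_u\le M_u\le C_1(g,d,t)^{1+r}$, and for $u\in\Sigma$ you pair $w(a)$ with $w(u-a)$. Adding the two parts gives $\bigl(C_1(g,d,t)^{1+r}+C_2(g,d,t)\bigr)\bigl(\sum_{a\in X}w(a)^2\bigr)^2$, which is exactly the quantity the constant $C(g,d,t)$ of Theorem~\ref{main_theorem_re} was chosen to dominate.
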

\begin{proof}
    For each $u \in X+X$, define 
    \[N(w)_u := \sum_{\substack{a+b=u \\ a,b \in X}} w(a)w(b).\]
    We have to estimate the sum 
    \[\sum_{u \in X+X} N(w)_u^2 = \sum_{\substack{u \in X+X \\ u \notin \Sigma}} N(w)_u^2 + \sum_{\substack{u \in X+X \\ u \in \Sigma}} N(w)_u^2\]
    
    We first estimate the first sum. By the Cauchy-Schwarz inequality, 
    \[N(w)_u^2 \leq N_u\sum_{\substack{a+b=u \\ a,b \in X}} w(a)^2w(b)^2.\]
    Thus 
    \[\sum_{\substack{u \in X+X \\ u \notin \Sigma}} N(w)_u^2 \leq \sum_{\substack{u \in X+X \\ u \notin \Sigma}} N_u\sum_{\substack{a+b=u \\ a,b \in X}} w(a)^2w(b)^2.\]
    By \eqref{N_u_bound} and \eqref{Y_u_bounded}, this sum is bounded by 
    \[C_1(g,d,t)^{1+r} \sum_{\substack{u \in X+X \\ u \notin \Sigma}}\sum_{\substack{a+b=u \\ a,b \in X}} w(a)^2w(b)^2.\]
    However, 
    \[\sum_{\substack{u \in X+X \\ u \notin \Sigma}}\sum_{\substack{a+b=u \\ a,b \in X}} w(a)^2w(b)^2 \leq \sum_{u \in X+X}\sum_{\substack{a+b=u \\ a,b \in X}} w(a)^2w(b)^2 = \left(\sum_{a \in X} w(a)^2\right)^2.\]
    Hence, the first sum is bounded by 
    \[C_1(g,d,t)^{1+r}\left(\sum_{a \in X} w(a)^2\right)^2.\]

    We next estimate the second sum. By the Cauchy-Schwarz inequality, we obtain 
    \[N(w)_u^2 \leq \left(\sum_{\substack{a \in X \\ u-a \in X}} w(a)w(u-a)\right)^2 \leq \left(\sum_{a \in X} w(a)^2\right)^2.\]
    By \eqref{Sigma_finite}, the second sum is bounded by 
    \[C_2(g,d,t)\left(\sum_{a \in X} w(a)^2\right)^2.\]

    Hence, the whole sum is bounded by 
    \[C(g,d,t)^{1+r}\left(\sum_{a \in X} w(a)^2\right)^2.\]
\end{proof}

We are now ready to prove Theorem~\ref{product_theorem}.

\begin{theorem}\label{product_theorem_re}
    For each $1 \leq i \leq m$, let $A_i/F$ and $f_i:A_i \rightarrow \bbp^{n_i}$ be as in Theorem~\ref{main_theorem_re}, and write $C_i := C(g_i,d_i,t_i)$. Let 
    \[A := A_1 \times \cdots \times A_m,\quad n := n_1+\cdots+n_m,\]
    and consider the product morphism 
    \[f:A \longrightarrow \bbp^{n_1} \times \cdots \times \bbp^{n_m},\quad f = (f_1,\ldots,f_m).\]
    Let $\Gamma$ be a subgroup of $A(F)$ of finite rank $r$. Then the constant 
    \[C := \prod_{i=1}^m C_i\]
    satisfies the following property.
    
    Fix affine charts $\bba^{n_i} \subseteq \bbp^{n_i}$ and identify 
    \[\bba^{n} = \bba^{n_1} \times \cdots \times \bba^{n_m} \subseteq \bbp^{n_1} \times \cdots \times \bbp^{n_m}.\]
    For every finite subset $X \subseteq f(\Gamma) \cap \bba^n$, 
    \[E(X) \leq C^{1+r}\lvert X \rvert^2,\qquad \lvert X+X \rvert \geq \left(C^{-1}\right)^{1+r}\lvert X \rvert^2.\]
\end{theorem}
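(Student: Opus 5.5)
We prove the theorem by induction on $m$, using Lemma~\ref{product_lemma} to split off one factor at a time and Theorem~\ref{weighted_theorem} to control each factor. In fact we prove the stronger weighted statement: for every weight $w:X\to[0,\infty)$,
\[\sum_{u}\Bigl(\sum_{a+b=u,\ a,b\in X} w(a)w(b)\Bigr)^2 \le C^{1+r}\Bigl(\sum_{a\in X}w(a)^2\Bigr)^2 .\]
Taking $w\equiv 1$ gives $E(X)\le C^{1+r}\lvert X\rvert^2$. The sumset bound then follows from \eqref{Cauchy_Schwarz_X}. The case $m=1$ is exactly Theorem~\ref{weighted_theorem}.

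First we reduce the rank. Let $\Gamma_i := p_i(\Gamma)\subseteq A_i(F)$, which has rank $r_i\le r$. Then $\Gamma\subseteq \Gamma_1\times\cdots\times\Gamma_m$, and it suffices to bound by $\prod_i C_i^{1+r_i}\le C^{1+r}$. Enlarging $\Gamma$ to the product is harmless because $X\subseteq f(\Gamma)\cap\bba^n$ is then also in $f(\prod\Gamma_i)$.

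Inductive step. Write $A=A_1\times A'$ with $A'=A_2\times\cdots\times A_m$, so $X\subseteq \bba^{n_1}\times\bba^{n'}$. For $a\in\bba^{n_1}$ let $X_a$ be the fibre, and set the weight $W(a):=\bigl(\sum_{x\in X_a}w(a,x)^2\bigr)^{1/2}$ on $X^{(1)}:=p_1(X)\subseteq f_1(\Gamma_1)\cap\bba^{n_1}$. Mimicking the proof of Lemma~\ref{product_lemma} with weights, we get
\[N(w)_{(u,v)}=\sum_{a+b=u}\sum_{x+y=v}w(a,x)w(b,y).\]
Applying Cauchy--Schwarz in $a$ with weights $\mathcal E_{a,b}^{1/2}$ gives
\[\sum_{u,v}N(w)_{(u,v)}^2\le \sum_u\Bigl(\sum_{a+b=u}\mathcal E_{a,b}^{1/2}\Bigr)^2,\]
where $\mathcal E_{a,b}:=\sum_v\bigl(\sum_{x+y=v}w(a,x)w(b,y)\bigr)^2$ is a weighted mixed energy. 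Each $X_a\subseteq f'(\Gamma')\cap\bba^{n'}$. The weighted analogue of \eqref{Energy_Cauchy_Schwarz} (same proof, Cauchy--Schwarz over differences) gives $\mathcal E_{a,b}^2\le\mathcal E_{a,a}\mathcal E_{b,b}$. The induction hypothesis applied to $X_a$ with weight $w(a,\cdot)$ gives $\mathcal E_{a,a}\le C'^{1+r'}W(a)^4$. Hence $\mathcal E_{a,b}^{1/2}\le C'^{(1+r')/2}W(a)W(b)$.

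Substituting, we obtain
\[\sum_{u,v}N(w)_{(u,v)}^2\le C'^{1+r'}\sum_u\Bigl(\sum_{a+b=u}W(a)W(b)\Bigr)^2 .\]
Theorem~\ref{weighted_theorem} applied to $X^{(1)}$ with weight $W$ bounds this by
\[C'^{1+r'}C_1^{1+r_1}\Bigl(\sum_a W(a)^2\Bigr)^2=C'^{1+r'}C_1^{1+r_1}\Bigl(\sum_{X}w^2\Bigr)^2,\]
which closes the induction.

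The main point needing care is this weighted bookkeeping: both Lemma~\ref{product_lemma} and \eqref{Energy_Cauchy_Schwarz} must be used in weighted form, because the fibres $X_a$ have varying sizes. An unweighted induction would produce $\sum_a\lvert X_a\rvert$-type factors rather than $\lvert X\rvert$, and would not close.
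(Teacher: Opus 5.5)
Your argument is correct, but it places the weights differently from the paper. The paper keeps the induction hypothesis unweighted. It applies Lemma~\ref{product_lemma} and \eqref{Energy_Cauchy_Schwarz} exactly as stated, and uses the inductive bound $E(X_a)\le C'^{1+r}\lvert X_a\rvert^2$ on the fibres to get $E(X_a,X_b)^{1/2}\le C'^{(1+r)/2}\lvert X_a\rvert^{1/2}\lvert X_b\rvert^{1/2}$. It then invokes Theorem~\ref{weighted_theorem} only once, on the simple factor $A_1$, with weight $w(a)=\lvert X_a\rvert^{1/2}$.

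You instead carry an arbitrary weight through the whole induction. This forces you to prove weighted versions of Lemma~\ref{product_lemma} and of \eqref{Energy_Cauchy_Schwarz}, and your verifications of both are fine. What this buys is a stronger conclusion: the weighted inequality for products, which would be useful if one wanted to feed products into a further step. The cost is bookkeeping that the stated theorem does not need.

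Your closing remark is mistaken, however. The unweighted induction does close, because the weight $\lvert X_a\rvert^{1/2}$ satisfies $\sum_a w(a)^2=\sum_a\lvert X_a\rvert=\lvert X\rvert$ exactly. Weights are needed only at the single simple factor, which is why the paper proves Theorem~\ref{weighted_theorem} only in the simple case.

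Two small bookkeeping points. First, the comparison $\prod_i C_i^{1+r_i}\le C^{1+r}$ uses $C_i\ge 1$. This is harmless, since the constants may always be taken $\ge 1$, and the paper uses the same monotonicity in $r$ implicitly. The per-factor refinement $\prod_i C_i^{1+r_i}$ is also available in the paper's unweighted argument, so it is not a gain from weighting. Second, if you literally replace $\Gamma$ by $\Gamma_1\times\cdots\times\Gamma_m$, then $C'^{1+r'}$ in your inductive step must be read as $\prod_{i\ge 2}C_i^{1+r_i}$. It cannot mean $C'$ raised to one plus the rank of $\Gamma_2\times\cdots\times\Gamma_m$, which can be as large as $(m-1)r$. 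The simplest fix is to skip the enlargement and take $\Gamma_1=p_1(\Gamma)$ and $\Gamma'=p_2(\Gamma)$, both of rank at most $r$, as the paper does.
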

\begin{proof}
    We use induction on $m$. The base case $m=1$ is exactly Theorem~\ref{main_theorem_re}. 
    
    Suppose the theorem is proved for $m-1$. Let $p_1^A:A \rightarrow A_1$ be the first projection and $p_2^A:A \rightarrow A_2 \times \cdots \times A_m$ be the remaining projection. Let $\Gamma_1 = p_1^A(\Gamma)$ and $\Gamma' = p_2^A(\Gamma)$. Since $\Gamma$ has finite rank $r$, $\Gamma_1$ and $\Gamma'$ both have finite rank at most $r$. In particular, we are free to use induction hypothesis for $A_2 \times \cdots \times A_m$ and Theorem~\ref{weighted_theorem} for $A_1$.

    For each $a \in \bba^{n_1}$, define 
    \[X_a := \{x \in \bba^{n_2} \times \cdots \times \bba^{n_m}\:|\:(a,x) \in X\}.\]
    By Lemma~\ref{product_lemma}, 
    \[E(X) \leq \sum_{u \in \bba^{n_1}}\left(\sum_{a+b=u} E(X_a,X_b)^{1/2}\right)^2.\]
    By \eqref{Energy_Cauchy_Schwarz} and the induction hypothesis, 
    \[E(X_a,X_b) \leq E(X_a)^{1/2}E(X_b)^{1/2} \leq (C')^{1+r}\lvert X_a \rvert\lvert X_b \rvert\]
    where 
    \[C' = \prod_{i=2}^m C_i.\]
    Hence, we obtain 
    \[E(X) \leq (C')^{1+r}\sum_{u \in \bba^{n_1}}\left(\sum_{a+b=u} \lvert X_a \rvert^{1/2}\lvert X_b \rvert^{1/2}\right)^2.\]
    Now let $X_1$ be the image of $X$ under the first projection. By applying Theorem~\ref{weighted_theorem} for $X_1$ and the weight $w(a) = \lvert X_a \rvert^{1/2}$, we conclude that 
    \[E(X) \leq (C_1)^{1+r}(C')^{1+r}\left(\sum_{a \in X_1} \lvert X_a \rvert\right)^2 = C^{1+r}\lvert X \rvert^2.\]
    The lower bound for $\lvert X+X \rvert$ follows from \eqref{Cauchy_Schwarz_X}.
\end{proof}

\end{document}